\documentclass[hidelinks,onefignum,onetabnum]{siamart251216}

\ifpdf
\hypersetup{
  pdftitle={Broken-space additive Schwarz approximations to conforming finite-element mass inverses},
  pdfauthor={O. A. Krzysik, B. S. Southworth, and G. A. Wimmer}
}
\fi

\usepackage{amsmath,amssymb,amsfonts}
\usepackage{mathtools}
\mathtoolsset{centercolon}

\usepackage{bm}
\usepackage{xcolor}
\usepackage{graphicx}
\usepackage{epstopdf}
\usepackage{algorithmic}
\usepackage{tikz}
\usetikzlibrary{shapes.geometric, calc}
\usepackage{subcaption}
\usepackage{array}
\usepackage{booktabs}
\usepackage{comment}
\usepackage[normalem]{ulem}

\ifpdf
  \DeclareGraphicsExtensions{.pdf,.png,.jpg,.eps}
\else
  \DeclareGraphicsExtensions{.eps}
\fi

\headers{Broken-space mass inverse approximations}
{O. A. Krzysik, B. S. Southworth, and G. A. Wimmer}

\title{Broken-space Additive Schwarz Mass Inverse Approximations and (Block) Preconditioning
\thanks{\funding{This work was supported by the Laboratory Directed Research and Development program of Los Alamos National Laboratory, under project number 20240261ER. Los Alamos National Laboratory report number LA-UR-26-25735.}}}

\author{
Oliver A. Krzysik\thanks{Theoretical Division, Los Alamos National Laboratory
(\email{okrzysik@lanl.gov}, \url{https://orcid.org/0000-0001-7880-6512}.}
\and
Ben S. Southworth\thanks{Theoretical Division, Los Alamos National Laboratory
(\email{southworth@lanl.gov}, \url{https://orcid.org/0000-0002-0283-4928}.}
\and
Golo A. Wimmer\thanks{Theoretical Division, Los Alamos National Laboratory
(\email{gwimmer@lanl.gov}, \url{https://orcid.org/0000-0002-7871-1748}.}
}

\newsiamthm{remark}{Remark}
\newsiamremark{assumption}{Assumption}
\newsiamremark{hypothesis}{Hypothesis}
\newsiamthm{claim}{Claim}

\crefname{assumption}{Assumption}{Assumptions}
\Crefname{assumption}{Assumption}{Assumptions}
\crefname{hypothesis}{Hypothesis}{Hypotheses}
\Crefname{hypothesis}{Hypothesis}{Hypotheses}
\crefname{claim}{Claim}{Claims}
\Crefname{claim}{Claim}{Claims}

\crefname{section}{Section}{Sections}
\Crefname{section}{Section}{Sections}
\crefname{subsection}{Section}{Sections}
\Crefname{subsection}{Section}{Sections}

\newcolumntype{P}[1]{>{\raggedright\arraybackslash}p{#1}}

\allowdisplaybreaks

\newcommand{\R}{\mathbb R}
\newcommand{\Vh}{V_h}
\newcommand{\Vhbr}{V_h^{\rm br}}
\newcommand{\Qh}{Q_h}

\newcommand{\Hdiv}{H(\operatorname{div})}

\newcommand{\Th}{\mathcal T_h}
\newcommand{\ud}{\,\mathrm{d}}

\newcommand{\wh}[1]{\widehat{#1}}
\newcommand{\wt}[1]{\widetilde{#1}}

\newcommand{\Mbr}{M_{\rm br}}
\newcommand{\Mwtinv}{\wt M^{-1}}
\newcommand{\Mdiag}{M_{\rm diag}}
\newcommand{\Mbras}{M_{\rm bras}}
\newcommand{\range}{\operatorname{range}}
\newcommand{\kerop}{\operatorname{ker}}

\DeclareMathOperator{\diag}{diag}

\DeclareMathOperator{\curl}{curl}
\DeclareMathOperator{\grad}{grad}

\let\div\relax
\DeclareMathOperator{\div}{div}

\begin{document}
\maketitle

\begin{abstract}
Finite-element mass matrix solves and approximate inverses arise often in explicit time integration as well as Schur-complement-based block preconditioning.
A diagonal approximation of the mass matrix is cheap and widely used, but can be a poor approximation, particularly for high-order elements. 
This paper introduces a broken-space additive Schwarz (BRAS) mass inverse approximation, formed by applying exact element-local inverse mass matrices
on the broken finite-element space and averaging the result back to the conforming
space.  
The construction uses the same element matrices and local-to-global maps as
standard mass assembly, has the same element-adjacency sparsity graph as the
conforming mass matrix, and is symmetric positive definite for any conforming
space, mesh geometry, and polynomial basis.  We prove spectral bounds for the
preconditioned mass matrix, and wide-ranging numerical experiments for
\(H^1\), \(H(\operatorname{curl})\), and \(H(\operatorname{div})\) finite elements on two- and three-dimensional simplicial meshes show that BRAS reduces spectral condition numbers, Krylov iterations, and solve times relative to diagonal preconditioning. For preconditioned conjugate gradient (CG), BRAS yields a 1.1--4.7$\times$ speedup over diagonal/Jacobi preconditioning across all cases tested over finite-element orders $p\in[1,4]$. Further, theory and numerical experiments show that in the block-preconditioning case, BRAS can improve Schur-complement approximations and reduce outer solve times. On mixed Poisson and biharmonic systems, BRAS yields a 1.5--3$\times$ speedup in time-to-solution over a standard diagonal-based preconditioning approach. 
\end{abstract}

\begin{keywords}
mass matrix, 
finite elements, 
additive Schwarz, 
block preconditioning, 
Schur complement, 
algebraic multigrid
\end{keywords}

\begin{MSCcodes}
65F08, 65N30, 65N55
\end{MSCcodes}

\tableofcontents

\section{Introduction}
\label{sec:introduction}

Let \(\Vh\) be a conforming finite-element space with basis \(\{\phi_i\}\).
The mass matrix
\(
    M_{ij} = (\phi_j,\phi_i)_\Omega
\)
is the coefficient representation of the \(L^2(\Omega)\) Riesz map on \(\Vh\). Inverting or approximately inverting mass matrices arises often in finite-element simulation of partial differential equations (PDEs), both in explicit time integration of dynamic PDEs and in block preconditioning for mixed systems of PDEs. For conforming finite-element spaces, the mass inverse is typically dense, and although often considered one of the least interesting aspects of finite-element simulations, it can have a significant effect on total computational cost and wallclock time. For explicit integration, applying the mass inverse can be a dominant computational cost, and the only globally coupled component across all spatial cells and MPI cores. In block preconditioning, the local/sparse approximation of a mass inverse within an approximate Schur complement can determine the quality of the Schur complement approximation. This in turn largely determines the convergence of preconditioned fixed-point and Krylov iterations; see, e.g., \cite{notay2014new,southworth2020fixed}. 

One line of research avoids the need for mass inverse approximations or preconditioners by directly using lumped (diagonal) mass matrices in the discretization. Analyzing such an approach becomes a question of quadrature and functional approximation \cite{raviart1973use,chen1985lumped} rather than algebraic and spectral properties of the assembled matrix. With sufficient assumptions on mesh and finite-element space, mass lumping can work well, though it is not always viable, with  higher-order discretizations \cite{chin1999higher,jund2007arbitrary}, hyperbolic equations \cite{guermond2013correction}, Sobolev spaces other than $H^1$ \cite{cohen1998gauss,egger2020second}, and non-tensor-product or curvilinear meshes taking extra care. Recent work has also considered deferred correction approaches to avoid mass solves \cite{abgrall2017high}.

Relative to solvers for differential operators, little attention has been given to fast, general-purpose solvers for mass matrices; see related commentary in \cite{ainsworth2019triangles}. In fact, diagonal (Jacobi) approximations are commonplace in practice, including in many state-of-the-art codes. 
There are likely several reasons for this.
Firstly, diagonal approximations are simple and cheap, and in many cases they are simply ``good enough,'' with many conforming elements enjoying a reasonable spectral equivalence between the mass and its diagonal, independent of mesh resolution. 
Moreover, a diagonal approximation is natural in the block-preconditioning setting, where maintaining a sparse approximate Schur complement is typically important; in fact, a diagonal inverse in this setting can perform better than its standalone mass-preconditioning behavior would suggest.\footnote{Consider the Schur complement approximation
\(S=-GM^{-1}G^{\top} \approx -G \wt M^{-1} G^\top = \wt S\), based on the mass approximation \(\wt M^{-1} \approx M^{-1}\), and notice that the mass-inverse approximation is tested only on
\(\operatorname{range}(G^{\top})\).  Thus, directions where diagonal scaling is a poor approximation to \(M^{-1}\) may be invisible to the Schur complement.  
This restricted-space effect is quantified in \Cref{subsec:schur_theory} and arises in the mixed Poisson numerical results of \Cref{sec:mixed_poisson}.}
Lastly, mass matrices typically do not have a very nuanced structure, and this likely inhibits more effective approximations, like multilevel solvers. 

Although mass preconditioning literature is relatively small, existing work gives several important points of comparison.
Wathen \cite{wathen1987realistic} showed that continuous Galerkin mass matrices are spectrally equivalent to their diagonal, independent of mesh resolution, with spectral bounds built from unassembled element matrices. This relation to element matrices leads to independence of mesh resolution, but \emph{not} finite-element order \cite{maitre1996condition}. Recent work has focused on low-order-refined algorithms that precondition high-order conforming finite-element discretizations with appropriately defined low-order discretizations \cite{dohrmann2021spectral,pazner2023lor}. Such an approach has demonstrated strong results, but requires a more involved implementation, especially if one wanted to use it in the context of block preconditioning. Specialized \(p\)-robust mass preconditioners for scalar \(H^1\)-conforming spaces have been developed that rely on carefully constructed high-order bases \cite{ainsworth2019triangles,ainsworth2021tetrahedra,ainsworth2022systematic}, and fast factorization techniques have also been developed for high-order Bernstein local element matrices \cite{allen2020structured}. 
A special interpolation--histopolation basis is also used in \cite{pazner2024hdiv}, wherein high-order Raviart--Thomas mass matrices are then spectrally equivalent to their diagonal, independent of polynomial degree, and this approximation is used to construct Schur complement approximations in solving \(\Hdiv\) saddle-point problems.

In this paper we show that there is significant room for improvement over diagonal mass inverse approximations, particularly for higher-order discretizations and for approximation of Sobolev spaces other than $H^1$. 
In contrast to the specialized and basis-function oriented approaches mentioned above, here we propose a largely algebraic, and implementationally basis-function agnostic, framework for sparse approximation of mass inverses that applies to any conforming finite-element space, and is naturally suited for both efficient explicit time integration and approximate Schur complements. Our approach is based on a sparse approximate inverse formed by applying exact element-local inverse mass matrices on the underlying broken or unassembled finite-element space and averaging the result back to the conforming space. The resulting BRoken-space Additive Schwarz (BRAS) algorithm yields a mass inverse approximation in the sparsity pattern of the original mass matrix, and provides a significantly better approximation than the matrix diagonal or scalings thereof. Its construction only requires the same element-level information needed to assemble the mass matrix, namely the element mass matrices and the local-to-global DOF map. 

The remainder of this paper is structured as follows.
\Cref{sec:bras_construction} introduces the BRAS preconditioner and provides
supporting theory for it.
\Cref{sec:mass_numerics} studies BRAS for standalone mass systems
\(M\bm u=\bm f\), reporting condition numbers, solver iteration counts, and solve times.
Despite the BRAS inverse being more expensive to apply than a diagonal inverse, in all cases tested the reduced iteration counts provided by BRAS offset its higher apply cost, yielding 1.5--6$\times$ speedups in preconditioned conjugate gradient (CG) solve time across \(H^1\), \(H(\operatorname{curl})\), and \(H(\operatorname{div})\) finite elements on two- and three-dimensional simplicial meshes, and finite element orders $p\in[1,4]$.
Aspects of using approximate mass inverses in block preconditioning are considered
in \cref{sec:block_preconditioning}, with accompanying numerical results given in
\cref{sec:block_preconditioning_numerics}. For mixed Poisson and biharmonic problems, block preconditioning based on Schur complements built from BRAS inverses yield 1.5--3$\times$ speedups over those using standard diagonal mass inverse approximations. Concluding remarks are provided in
\cref{sec:conclusions}.

\section{BRAS: Broken-space additive Schwarz mass inverse}
\label{sec:bras_construction}

\subsection{Conforming mass matrix inverse}
\label{sec:mass_matrices}

Let \(\Omega\subset\R^d\) be a bounded domain and let \(\Th\) be a finite-element mesh. Let \(\Vh\) be a conforming finite-element space, scalar- or vector-valued, with
global conforming basis \(\{\phi_i\}_{i=1}^n\).  The corresponding mass matrix is the
\(L^2(\Omega)\) Gram matrix
\begin{equation}
\label{eq:mass_matrix_def}
    M_{ij}
    =
    (\phi_j,\phi_i)_{L^2(\Omega)}
    =
    \int_\Omega \phi_j(x)\cdot\phi_i(x)\,\ud x,
    \qquad i,j=1,\ldots,n.
\end{equation}
For scalar-valued spaces, the dot product denotes ordinary multiplication; for
vector-valued spaces, it denotes the Euclidean dot product of the vector-valued basis
functions. The matrix \(M\) represents the discrete \(L^2\) Riesz map.  If \(u_h=\sum_j (\bm{u})_j\phi_j\), then the coefficient vector \(M\bm{u}\) represents the functional
\(
    v_h \mapsto (u_h,v_h)_\Omega.
\)
Thus \(M^{-1}\) maps a coefficient representation of a discrete \(L^2\) functional back to the coefficient vector of the corresponding finite-element function. The inverse is global even though the bilinear form itself is elementwise local.

Given a symmetric positive definite (SPD) inverse approximation \(\Mwtinv\approx M^{-1}\), the implied approximate mass matrix is \({\wt M}\), and the natural measure of approximation is the generalized eigenproblem
\(
    M\bm{v} = \lambda {\wt M}\bm{v}.
\)
If the generalized eigenvalues $\{\lambda\}$ are contained in a small interval near unity, this implies rapid convergence of fixed-point or preconditioned CG applied to \(M\bm{u}=\bm{f}\), and also controls the conditioning of Schur complement approximations involving \(\Mwtinv\), as discussed in \cref{sec:block_preconditioning,sec:block_preconditioning_numerics}. 
The baseline approximation for a mass preconditioner and approximate inverse is the diagonal inverse $\Mdiag^{-1} = \diag(M)^{-1}$ or slight modifications or weightings thereof. This is the cheapest SPD approximation of $M^{-1}$, but ignores all off-diagonal coupling, which is an increasing fraction of the connections in \(M\) for increasing \(p\).

\subsection{Broken-space mass inverse}

Let \(\Vh\) be a conforming finite-element space.  Define the broken companion space
\begin{equation*}
    \Vhbr = \bigoplus_{K\in\Th} V(K),
\end{equation*}
where \(V(K)\) is the local finite-element space on the element \(K\).  Let \(\Mbr\) be the mass matrix on \(\Vhbr\).  Since \(\Vhbr\) is broken over the mesh, \(\Mbr\) is block diagonal:
\begin{equation*}
    \Mbr = \operatorname{diag}_{K\in\Th} M_K,
\end{equation*}
where \(M_K\) is the local element mass matrix. The conforming mass matrix is then assembled from broken pieces. Let \(n=\dim\Vh\) and \(n_{\rm br}=\dim\Vhbr\).  Each broken DOF is associated with a unique conforming DOF; denote this map by
\(\pi:\{1,\ldots,n_{\rm br}\}\to\{1,\ldots,n\}\).  Let
\begin{equation*}
    P:\Vh\to\Vhbr
\end{equation*}
be the conforming-to-broken injection defined in coefficient form by
\begin{equation*}
    (P\bm{u})_j = \bm{u}_{\pi(j)},
    \qquad j=1,\ldots,n_{\rm br}.
\end{equation*}
Equivalently, \(P_{ji}=1\) if \(i=\pi(j)\), and \(P_{ji}=0\) otherwise.  Thus
\(P\) represents a conforming function in the broken basis by copying each
conforming DOF to all of its element-local broken copies. An illustrative example is shown in \cref{fig:P_example}. 
Then the conforming mass matrix is assembled as 
\begin{equation}
\label{eq:M_PMbrP}
    M = P^{\top}\Mbr P = \sum_{K\in\Th} P_K^{\top}M_KP_K,
\end{equation}
where \(P_K\) denotes the local interpolation to the element \(K\).

\pgfmathsetmacro{\titlegap}{0.75}
\begin{figure}[h!]
\centering
\resizebox{\linewidth}{!}{%

\pgfmathsetmacro{\titlegap}{0.75}
\begin{tikzpicture}[
    scale=0.68,
    every node/.style={font=\scriptsize},
    neightri/.style={draw=black!35, fill=black!5, line width=0.45pt},
    centtri/.style={draw=black, fill=black!2, line width=0.75pt},
    brokenvertexdof/.style={rectangle, rounded corners=0.3pt, minimum size=4.6pt,
        inner sep=0pt, draw=blue!65!black, fill=white},
    brokenedgedof/.style={circle, minimum size=4.5pt, inner sep=0pt,
        draw=orange!85!black, fill=white},
    brokencelldof/.style={diamond, aspect=1, minimum size=5.4pt, inner sep=0pt,
        draw=green!45!black, fill=white},
    confvertexdof/.style={rectangle, rounded corners=0.3pt, minimum size=4.6pt,
        inner sep=0pt, draw=blue!65!black, fill=blue!22},
    confedgedof/.style={circle, minimum size=4.5pt, inner sep=0pt,
        draw=orange!85!black, fill=orange!28},
    confcelldof/.style={diamond, aspect=1, minimum size=5.4pt, inner sep=0pt,
        draw=green!45!black, fill=green!25},
    copyarrow/.style={->, line width=0.55pt, draw=black!75},
    paneltitle/.style={font=\small}
]

\newcommand{\leftPanelDrop}{0.2cm} %

\pgfmathsetmacro{\panelTitleY}{2.95}
\pgfmathsetmacro{\patchL}{1.40}
\pgfmathsetmacro{\patchH}{0.8660254038*\patchL}
\pgfmathsetmacro{\brokenElemScale}{0.8} %
\pgfmathsetmacro{\brokenCentroidScale}{1.3} %

\pgfmathsetmacro{\confPatchScale}{0.8} %

\def\DrawBrokenTriangle#1#2#3#4#5{%
    \coordinate (#5g) at ($1/3*(#2)+1/3*(#3)+1/3*(#4)$);%
    \coordinate (#5s) at ($(lPatchCenter)+\brokenCentroidScale*(#5g)-\brokenCentroidScale*(lPatchCenter)$);%
    \coordinate (#5a) at ($(#5s)+\brokenElemScale*(#2)-\brokenElemScale*(#5g)$);%
    \coordinate (#5b) at ($(#5s)+\brokenElemScale*(#3)-\brokenElemScale*(#5g)$);%
    \coordinate (#5c) at ($(#5s)+\brokenElemScale*(#4)-\brokenElemScale*(#5g)$);%
    \draw[#1] (#5a)--(#5b)--(#5c)--cycle;%
}

\begin{scope}[xshift=0cm]
    \node[paneltitle] at (0,\panelTitleY) {(a) broken DOFs};

    \begin{scope}[yshift=-\leftPanelDrop]

    \coordinate (lA) at (0,\patchH);

    \coordinate (lA) at (0,\patchH);
    \coordinate (lB) at ({-0.5*\patchL},0);
    \coordinate (lC) at ({0.5*\patchL},0);
    \coordinate (lD) at (-\patchL,\patchH);
    \coordinate (lE) at (\patchL,\patchH);
    \coordinate (lF) at (0,-\patchH);
    \coordinate (lU) at ({-0.5*\patchL},{2.0*\patchH});
    \coordinate (lV) at ({0.5*\patchL},{2.0*\patchH});
    \coordinate (lJ) at ({-1.5*\patchL},0);
    \coordinate (lI) at (-\patchL,-\patchH);
    \coordinate (lL) at (\patchL,-\patchH);
    \coordinate (lK) at ({1.5*\patchL},0);
    \coordinate (lPatchCenter) at ($1/3*(lA)+1/3*(lB)+1/3*(lC)$);

    \DrawBrokenTriangle{neightri}{lA}{lD}{lU}{lADU}
    \DrawBrokenTriangle{neightri}{lA}{lU}{lV}{lAUV}
    \DrawBrokenTriangle{neightri}{lA}{lV}{lE}{lAVE}

    \DrawBrokenTriangle{neightri}{lB}{lD}{lJ}{lBDJ}
    \DrawBrokenTriangle{neightri}{lB}{lJ}{lI}{lBJI}
    \DrawBrokenTriangle{neightri}{lB}{lI}{lF}{lBIF}

    \DrawBrokenTriangle{neightri}{lC}{lF}{lL}{lCFL}
    \DrawBrokenTriangle{neightri}{lC}{lL}{lK}{lCLK}
    \DrawBrokenTriangle{neightri}{lC}{lK}{lE}{lCKE}

    \DrawBrokenTriangle{neightri}{lA}{lB}{lD}{lABD}
    \DrawBrokenTriangle{neightri}{lA}{lC}{lE}{lACE}
    \DrawBrokenTriangle{neightri}{lB}{lC}{lF}{lBCF}

    \DrawBrokenTriangle{centtri}{lA}{lB}{lC}{lABC}

    \node[brokenvertexdof] at (lABCa) {};
    \node[brokenvertexdof] at (lABCb) {};
    \node[brokenvertexdof] at (lABCc) {};

    \node[brokenvertexdof] at (lABDa) {};
    \node[brokenvertexdof] at (lABDb) {};
    \node[brokenvertexdof] at (lACEa) {};
    \node[brokenvertexdof] at (lACEb) {};
    \node[brokenvertexdof] at (lBCFa) {};
    \node[brokenvertexdof] at (lBCFb) {};

    \node[brokenvertexdof] at (lADUa) {};
    \node[brokenvertexdof] at (lAUVa) {};
    \node[brokenvertexdof] at (lAVEa) {};

    \node[brokenvertexdof] at (lBDJa) {};
    \node[brokenvertexdof] at (lBJIa) {};
    \node[brokenvertexdof] at (lBIFa) {};

    \node[brokenvertexdof] at (lCFLa) {};
    \node[brokenvertexdof] at (lCLKa) {};
    \node[brokenvertexdof] at (lCKEa) {};

    \node[brokenedgedof] at ($(lABCa)!0.5!(lABCb)$) {};
    \node[brokenedgedof] at ($(lABDa)!0.5!(lABDb)$) {};

    \node[brokenedgedof] at ($(lABCa)!0.5!(lABCc)$) {};
    \node[brokenedgedof] at ($(lACEa)!0.5!(lACEb)$) {};

    \node[brokenedgedof] at ($(lABCb)!0.5!(lABCc)$) {};
    \node[brokenedgedof] at ($(lBCFa)!0.5!(lBCFb)$) {};

    \node[brokencelldof] at ($1/3*(lABCa)+1/3*(lABCb)+1/3*(lABCc)$) {};
\end{scope}
\end{scope}

\draw[copyarrow, bend left=55]
    (2.55,0.72) to
    node[midway, above=4pt] {$P^\top$}
    (3.8,0.72);

\draw[copyarrow, bend left=55]
    (3.8,-0.72) to
    node[midway, below=4pt] {$P$}
    (2.55,-0.72);

\begin{scope}[xshift=5.75cm]
    \node[paneltitle] at (0,\panelTitleY) {(b) conforming DOFs};

    \pgfmathsetmacro{\confPatchL}{\confPatchScale*\patchL}
    \pgfmathsetmacro{\confPatchH}{0.8660254038*\confPatchL}
    
    \coordinate (mA) at (0,\confPatchH);
    \coordinate (mB) at ({-0.5*\confPatchL},0);
    \coordinate (mC) at ({0.5*\confPatchL},0);
    \coordinate (mD) at (-\confPatchL,\confPatchH);
    \coordinate (mE) at (\confPatchL,\confPatchH);
    \coordinate (mF) at (0,-\confPatchH);
    \coordinate (mU) at ({-0.5*\confPatchL},{2.0*\confPatchH});
    \coordinate (mV) at ({0.5*\confPatchL},{2.0*\confPatchH});
    \coordinate (mJ) at ({-1.5*\confPatchL},0);
    \coordinate (mI) at (-\confPatchL,-\confPatchH);
    \coordinate (mL) at (\confPatchL,-\confPatchH);
    \coordinate (mK) at ({1.5*\confPatchL},0);

    \draw[neightri] (mA)--(mD)--(mU)--cycle;
    \draw[neightri] (mA)--(mU)--(mV)--cycle;
    \draw[neightri] (mA)--(mV)--(mE)--cycle;

    \draw[neightri] (mB)--(mD)--(mJ)--cycle;
    \draw[neightri] (mB)--(mJ)--(mI)--cycle;
    \draw[neightri] (mB)--(mI)--(mF)--cycle;

    \draw[neightri] (mC)--(mF)--(mL)--cycle;
    \draw[neightri] (mC)--(mL)--(mK)--cycle;
    \draw[neightri] (mC)--(mK)--(mE)--cycle;

    \draw[neightri] (mA)--(mB)--(mD)--cycle;
    \draw[neightri] (mA)--(mC)--(mE)--cycle;
    \draw[neightri] (mB)--(mC)--(mF)--cycle;

    \draw[centtri] (mA)--(mB)--(mC)--cycle;

    \node[confvertexdof] at (mA) {};
    \node[confvertexdof] at (mB) {};
    \node[confvertexdof] at (mC) {};

    \node[confedgedof] at ($(mA)!0.5!(mB)$) {};
    \node[confedgedof] at ($(mA)!0.5!(mC)$) {};
    \node[confedgedof] at ($(mB)!0.5!(mC)$) {};

    \node[confcelldof] at ($1/3*(mA)+1/3*(mB)+1/3*(mC)$) {};
\end{scope}

\begin{scope}[xshift=8.85cm]
    \pgfmathsetmacro{\rhsCenter}{1.60}

    \pgfmathsetmacro{\yvertex}{1.18}
    \pgfmathsetmacro{\yedge}{-0.32}
    \pgfmathsetmacro{\ycell}{-1.40}

    \pgfmathsetmacro{\vsep}{0.34}
    \pgfmathsetmacro{\esep}{0.23}

    \node[paneltitle, anchor=center] at (\rhsCenter,\panelTitleY)
        {(c) action of \(D^{-1} P^\top\)};

    \node[brokenvertexdof] at (0.15,{\yvertex+2.5*\vsep}) {};
    \node[brokenvertexdof] at (0.15,{\yvertex+1.5*\vsep}) {};
    \node[brokenvertexdof] at (0.15,{\yvertex+0.5*\vsep}) {};
    \node[brokenvertexdof] at (0.15,{\yvertex-0.5*\vsep}) {};
    \node[brokenvertexdof] at (0.15,{\yvertex-1.5*\vsep}) {};
    \node[brokenvertexdof] at (0.15,{\yvertex-2.5*\vsep}) {};
    \draw[copyarrow] (0.42,\yvertex) -- (1.27,\yvertex);
    \node[confvertexdof] at (1.52,\yvertex) {};
    \node[anchor=west] at (2.35,\yvertex) {$d_i=6$};

    \node[brokenedgedof] at (0.15,{\yedge+\esep}) {};
    \node[brokenedgedof] at (0.15,{\yedge-\esep}) {};
    \draw[copyarrow] (0.42,\yedge) -- (1.27,\yedge);
    \node[confedgedof] at (1.52,\yedge) {};
    \node[anchor=west] at (2.35,\yedge) {$d_i=2$};

    \node[brokencelldof] at (0.15,\ycell) {};
    \draw[copyarrow] (0.42,\ycell) -- (1.27,\ycell);
    \node[confcelldof] at (1.52,\ycell) {};
    \node[anchor=west] at (2.35,\ycell) {$d_i=1$};

    \node[align=center, text width=4.2cm] at (1.55,-2.15)
        {\(d_i=\#\{\)broken copies of DOF \(i\}\)};
\end{scope}

\begin{scope}[yshift=-3.18cm]
    \node[anchor=west] at (-2.05,0.00) {broken};
    \node[anchor=west] at (-2.05,-0.46) {conforming};

    \node[brokenvertexdof] at (0.95,0.00) {};
    \node[anchor=west] at (1.15,0.00) {vertex DOF};

    \node[confvertexdof] at (0.95,-0.46) {};
    \node[anchor=west] at (1.15,-0.46) {vertex DOF};

    \node[brokenedgedof] at (4.00,0.00) {};
    \node[anchor=west] at (4.20,0.00) {edge DOF};

    \node[confedgedof] at (4.00,-0.46) {};
    \node[anchor=west] at (4.20,-0.46) {edge DOF};

    \node[brokencelldof] at (6.65,0.00) {};
    \node[anchor=west] at (6.85,0.00) {cell-interior DOF};

    \node[confcelldof] at (6.65,-0.46) {};
    \node[anchor=west] at (6.85,-0.46) {cell-interior DOF};
\end{scope}

\end{tikzpicture}

}

\caption{Local illustration of the conforming-to-broken injection \(P\) for a
domain-interior triangular element.  The left panel shows the relevant broken
local copies on the central element and all of its vertex-neighboring elements,
drawn separated before conformity is imposed.  The middle panel shows the same
patch after identifying shared vertex and edge DOFs.  Each column of \(P\)
copies one conforming coefficient into all of its broken local copies, and
therefore \(D=P^\top P\) is diagonal with entries equal to the number of such
copies.  For the local patch shown here, the vertex DOFs have multiplicity
\(6\), interior-edge DOFs have multiplicity \(2\), and element-interior DOFs
have multiplicity \(1\).
}
\label{fig:P_example}
\end{figure}

The BRAS inverse approximation we present in this paper consists of inverting local element matrices in the broken space, and mapping them back to the global conforming space with a careful weighting by multiplicity:
\begin{equation}
\label{eq:mhatinvbras_def}
\boxed{
    \Mbras^{-1}
    =
    (P^\top P)^{-1}P^{\top}\Mbr^{-1}P (P^\top P)^{-1}.
    }
\end{equation}
Here $D := P^\top P \in \mathbb{R}^{n \times n}$ is a diagonal matrix, whose $i$th entry counts the number of broken copies of the $i$th conforming DOF,
\[
D=P^{\top}P=\diag(d_1,\ldots,d_n),
    \qquad
    d_i=\#\{j:\pi(j)=i\}.
\]
\begin{subequations}
Note that the inverse \(\Mbr^{-1} = \operatorname{diag}_{K\in\Th} M_K^{-1}\) is well defined, since element-local mass matrices are SPD.
Thus BRAS is assembled analogously to the conforming mass matrix itself, replacing each local mass block by its inverse and applying multiplicity scaling. As such, in principle any finite-element library would be able to explicitly assemble BRAS using the same kernels as it uses to assemble conforming mass matrices. Note that for full column-rank $P$, it follows immediately from \eqref{eq:mhatinvbras_def} that $\Mbras^{-1}$ is SPD. 

The BRAS inverse can also be written as a symmetrically diagonally scaled elementwise sum,
\label{eq:mass_and_bras_element_sums}
\begin{align}
    \Mbras^{-1}
    &=
    D^{-1}
    \left(
    \sum_{K\in\Th} P_K^{\top}M_K^{-1}P_K
    \right)
    D^{-1}.
\end{align}
\end{subequations}
Here we see that \(\Mbras^{-1}\) has the same nonzero pattern as \(M\). Therefore one application of the assembled BRAS matrix has the same local dense matrix-vector work as one application of the assembled mass matrix.

\subsection{Theory}
\label{subsec:broken_exact_inverse}

We define two SPD (or symmetric negative definite) matrices $A,B\in\mathbb{R}^{n\times n}$ in the matrix pencil $(A,B)$ to be spectrally equivalent if there exist constants $0<\alpha \leq \beta$ such that for all $\bm{u}\in\mathbb{R}^n\backslash \{\bm{0} \}$
\begin{equation}\label{eq:spec-equiv}
    \alpha \leq \frac{\langle A\bm{u}, \bm{u}\rangle }{\langle B\bm{u}, \bm{u}\rangle } \leq \beta.
\end{equation}
Let $\beta$ and $\alpha$ be the maximum and minimum constants such that \eqref{eq:spec-equiv} holds, respectively, corresponding to the maximum and minimum generalized eigenvalues of the matrix pencil $(A,B)$. Then we define the \emph{spectral condition number} (distinct from the more common $\ell^2$-condition number) of the matrix pencil $(A,B)$ as
\begin{equation}\label{eq:spectral-kappa}
    \wh{\kappa}(A,B) \coloneqq \frac{\beta}{\alpha} = \frac{\lambda_{\max}(B^{-1}A)}{\lambda_{\min}(B^{-1}A)},
\end{equation}
where $\wh{\kappa}(A,B) = \wh{\kappa}(B,A)$.
In this section we prove a sharp bound on $\alpha$ for the BRAS-preconditioned mass operator; as discussed in \cref{sec:mass_numerics}, this is useful in practice because certain fixed-point iterations require reliable spectral intervals. Moreover, we characterize $\beta$ through a broken-space projection problem; numerical results in \cref{sec:mass_numerics} suggest that \(\Mbras^{-1}\) may give spectral condition numbers bounded independently of, or weakly dependent on, \(p\) in certain cases, although we do not prove such a bound in this work.

Consider the conforming mass solve \(M\bm u=\bm f\) for \(\bm u \in \mathbb{R}^{n}\).
By \eqref{eq:M_PMbrP}, \(M=P^{\top}\Mbr P\), so this problem can also be posed in terms of broken coefficients $\bm w \in \mathbb{R}^{n_{\rm br}}$ satisfying $P^\top \Mbr \bm w = \bm f$ subject to the constraint that  \(\bm{w} \in \range(P)\). Thus, let  \({\cal C} \in \mathbb{R}^{m \times n_{\rm br}}\) be a full-row-rank constraint
matrix on the broken coefficient space such that \(\kerop ({\cal C})=\range(P)\), wherein
\({\cal C}\bm w=\bm 0\) exactly enforces that the broken copies are consistent with a
single conforming coefficient vector. 
Here the number of constraints is \(m = n_{\rm br} - n\).
Define the broken right-hand side as \(\bm f_{\rm br} = P D^{-1}\bm f\),
where \(D=P^{\top}P\), and note that this lift has the correct assembled action because
\(P^{\top}P D^{-1}\bm f=\bm f\).  The conforming solve \(M\bm u=\bm f\) can therefore be written as the constrained broken-space saddle-point problem
\begin{equation}
\label{eq:broken_kkt}
    \begin{bmatrix}
        \Mbr & {\cal C}^{\top} \\
        {\cal C}   & 0
    \end{bmatrix}
    \begin{bmatrix}
        \bm w \\
        \bm\mu
    \end{bmatrix}
    =
    \begin{bmatrix}
        \bm f_{\rm br} \\
        \bm 0
    \end{bmatrix}.
\end{equation}
Indeed, the constraint gives \(\bm w=P\bm u\) for some conforming vector
\(\bm u\).  Multiplying the first block row of \eqref{eq:broken_kkt} by
\(P^{\top}\) eliminates the multiplier term, since \({\cal C}P=0\), and gives
\(P^{\top}\Mbr P\bm u=\bm f\), which is exactly the conforming system. The following Lemma uses this formulation to directly relate $\Mbras^{-1}$ to $M^{-1}$.

\begin{lemma}
\label{lem:bras_exact_inverse_correction}
From \eqref{eq:broken_kkt} define the Schur complement $S_{\cal C} \coloneqq -{\cal C}\Mbr^{-1}{\cal C}^{\top} \prec 0$. The BRAS inverse satisfies
\begin{equation}
\label{eq:bras_exact_inverse_correction}
    \Mbras^{-1}
    =
    M^{-1}
    -
    Z_{\cal C} S_{\cal C}^{-1} Z_{\cal C}^{\top},
    \qquad\textnormal{where }
    Z_{\cal C} = D^{-1}P^{\top}\Mbr^{-1}{\cal C}^{\top},
\end{equation}
and \(-Z_{\cal C} S_{\cal C}^{-1} Z_{\cal C}^{\top} \succeq 0\).
\end{lemma}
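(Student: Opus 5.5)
The plan is to derive \eqref{eq:bras_exact_inverse_correction} from an exact splitting of the broken inverse \(\Mbr^{-1}\) into a conforming part and a constraint part. The splitting comes from the \(\Mbr\)-orthogonal decomposition of the broken space induced by \(\kerop({\cal C})=\range(P)\). Concretely, I will aim to prove
\begin{equation*}
    \Mbr^{-1} = P M^{-1} P^{\top} + \Mbr^{-1}{\cal C}^{\top}\bigl({\cal C}\Mbr^{-1}{\cal C}^{\top}\bigr)^{-1}{\cal C}\Mbr^{-1}.
\end{equation*}
Once this holds, the lemma follows by sandwiching it between \(D^{-1}P^{\top}\) on the left and \(PD^{-1}\) on the right. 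Since \(D^{-1}P^{\top}P = I = P^{\top}PD^{-1}\), the first term becomes exactly \(M^{-1}\). The second term becomes \(Z_{\cal C}({\cal C}\Mbr^{-1}{\cal C}^{\top})^{-1}Z_{\cal C}^{\top} = -Z_{\cal C}S_{\cal C}^{-1}Z_{\cal C}^{\top}\), and the left-hand side is \(\Mbras^{-1}\) by \eqref{eq:mhatinvbras_def}.

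To prove the splitting, I will multiply both sides on the right by \(\Mbr\) and show that the two resulting operators are complementary projectors summing to the identity:
\[
    \Pi_1 := PM^{-1}P^{\top}\Mbr,\qquad \Pi_2 := \Mbr^{-1}{\cal C}^{\top}({\cal C}\Mbr^{-1}{\cal C}^{\top})^{-1}{\cal C}.
\]
Using \(M=P^{\top}\Mbr P\), one checks directly that \(\Pi_1^2=\Pi_1\) and \(\Pi_2^2=\Pi_2\). Moreover, \(\range(\Pi_1)=\range(P)\) and \(\kerop(\Pi_1)=\kerop(P^{\top}\Mbr)\), the latter because \(P\) has full column rank and \(M^{-1}\) is invertible. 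Likewise, \(\range(\Pi_2)=\range(\Mbr^{-1}{\cal C}^{\top})\) and \(\kerop(\Pi_2)=\kerop({\cal C})=\range(P)\), using that \({\cal C}\) has full row rank.

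The key step, and the only nonroutine one, is to identify \(\range(\Mbr^{-1}{\cal C}^{\top})\) with \(\kerop(P^{\top}\Mbr)\). Both are the \(\Mbr\)-orthogonal complement of \(\range(P)=\kerop({\cal C})\). For \(\Mbr^{-1}{\cal C}^{\top}\): \(\bm x^{\top}\Mbr\Mbr^{-1}{\cal C}^{\top}\bm y = ({\cal C}\bm x)^{\top}\bm y\), which vanishes for all \(\bm y\) exactly when \(\bm x\in\kerop({\cal C})\); a dimension count (\(m=n_{\rm br}-n\)) then gives equality of the subspaces. For \(P^{\top}\Mbr\): its kernel is by definition the \(\Mbr\)-orthogonal complement of \(\range(P)\).

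With this identification, \(\range(\Pi_2)=\kerop(\Pi_1)\) and \(\kerop(\Pi_2)=\range(\Pi_1)\). Two projectors with this complementary range/kernel structure sum to the identity, so \(\Pi_1+\Pi_2=I\), and multiplying by \(\Mbr^{-1}\) on the right gives the splitting.

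Finally, for semidefiniteness: \({\cal C}\Mbr^{-1}{\cal C}^{\top}\) is SPD because \({\cal C}\) has full row rank and \(\Mbr^{-1}\succ 0\). Hence \(S_{\cal C}\prec 0\), so \(-S_{\cal C}^{-1}\succ 0\). Then \(-Z_{\cal C}S_{\cal C}^{-1}Z_{\cal C}^{\top}\) is a congruence of a positive definite matrix, which is \(\succeq 0\) because \(\bm x^{\top}Z_{\cal C}(-S_{\cal C}^{-1})Z_{\cal C}^{\top}\bm x\ge 0\) for every \(\bm x\).
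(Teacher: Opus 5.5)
Your proof is correct, and it justifies the key identity differently from the paper.

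\textbf{The paper's route.} The paper block-eliminates the saddle-point system \eqref{eq:broken_kkt} for the particular right-hand side \(\bm f_{\rm br}=PD^{-1}\bm f\). This gives \(\bm w=(\Mbr^{-1}+\Mbr^{-1}{\cal C}^{\top}S_{\cal C}^{-1}{\cal C}\Mbr^{-1})\bm f_{\rm br}\). The paper then invokes the equivalence of the constrained broken problem with \(M\bm u=\bm f\), argued just before the lemma, to identify \(D^{-1}P^{\top}\bm w\) with \(M^{-1}\bm f\). Arbitrariness of \(\bm f\) gives the formula.

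\textbf{Your route.} You prove the operator identity \(\Mbr^{-1}=PM^{-1}P^{\top}+\Mbr^{-1}{\cal C}^{\top}({\cal C}\Mbr^{-1}{\cal C}^{\top})^{-1}{\cal C}\Mbr^{-1}\) on the whole broken space. You do this by recognizing \(\Pi_1\) and \(\Pi_2\) as the complementary projectors of the \(\Mbr\)-orthogonal splitting \(\range(P)\oplus\range(\Mbr^{-1}{\cal C}^{\top})\), and only then sandwich with \(D^{-1}P^{\top}\) and \(PD^{-1}\). The individual steps check out: idempotence, the range/kernel identifications (using full column rank of \(P\) and full row rank of \({\cal C}\)), and the dimension count.

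\textbf{Comparison.} Both arguments rest on the same identity; the paper's elimination formula is exactly your splitting applied to \(\bm f_{\rm br}\). The paper's version is shorter and reuses the KKT formulation it has just set up. It does, however, rest on the unique solvability of \eqref{eq:broken_kkt} and on the KKT/conforming equivalence. Yours is self-contained linear algebra that never touches the saddle-point system, and it exposes more structure. Since \(\Pi_2=I-\Pi_1\) and \(\Pi_1\) involves only \(P\) and \(\Mbr\), the correction
\(-Z_{\cal C}S_{\cal C}^{-1}Z_{\cal C}^{\top}=D^{-1}P^{\top}(\Mbr^{-1}-PM^{-1}P^{\top})PD^{-1}\)
is visibly independent of the particular choice of \({\cal C}\) with \(\kerop({\cal C})=\range(P)\). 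Moreover, the ordering \(\Mbr^{-1}\succeq PM^{-1}P^{\top}\) already holds at the broken level, before any sandwiching.
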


\begin{proof}
Solving \eqref{eq:broken_kkt} for \(\bm\mu\) gives
\(
\bm\mu = -S_{\cal C}^{-1}{\cal C} \Mbr^{-1} \bm f_{\rm br}.
\)
Plugging this into 
\eqref{eq:broken_kkt} and solving for \(\bm w\) gives
\[
    \bm w
    =
    \left(
        \Mbr^{-1}
        +
        \Mbr^{-1}{\cal C}^{\top}S_{\cal C}^{-1}{\cal C}\Mbr^{-1}
    \right)
    \bm f_{\rm br}.
\]
Since \( \bm w \in \range(P)\), there is a conforming coefficient vector \(\bm u\) such that
\(\bm w=P\bm u\). Hence, applying \(D^{-1}P^{\top}\) gives
\[
    \bm u
    =
    \bigl[
    D^{-1}P^{\top}
    \left(
        \Mbr^{-1}
        +
        \Mbr^{-1}{\cal C}^{\top}S_{\cal C}^{-1}{\cal C}\Mbr^{-1}
    \right)
    P D^{-1}
    \bigr]\bm f .
\]
The previous paragraph shows that this \(\bm u\) is the solution of
\(M\bm u=\bm f\), hence
\[
    M^{-1}
    =
    D^{-1}P^{\top}\Mbr^{-1}P D^{-1}
    +
    Z_{\cal C} S_{\cal C}^{-1} Z_{\cal C}^{\top}.
\]
Using the definition
\(
    \Mbras^{-1}=D^{-1}P^{\top}\Mbr^{-1}P D^{-1}
\)
and rearranging gives \eqref{eq:bras_exact_inverse_correction}.

\end{proof}

\begin{corollary}
\label{cor:bras_lambda_min}
The BRAS-preconditioned mass operator satisfies
\begin{equation}
\label{eq:bras_lambda_min}
    \lambda_{\min}(\Mbras^{-1} M) \geq 1.
\end{equation}
\end{corollary}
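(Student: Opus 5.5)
We read the corollary directly off \Cref{lem:bras_exact_inverse_correction}. That lemma gives $\Mbras^{-1} = M^{-1} - Z_{\cal C}S_{\cal C}^{-1}Z_{\cal C}^\top$, where the correction term $-Z_{\cal C}S_{\cal C}^{-1}Z_{\cal C}^\top$ is positive semidefinite. In Loewner order this says
\[
\Mbras^{-1} \succeq M^{-1}.
\]
The plan is to convert this inequality into the stated eigenvalue bound.

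First we confirm semidefiniteness. Since $S_{\cal C} \prec 0$, we have $-S_{\cal C}^{-1} \succ 0$. Then for any $\bm x$,
\[
\langle -Z_{\cal C}S_{\cal C}^{-1}Z_{\cal C}^\top \bm x,\bm x\rangle = \langle -S_{\cal C}^{-1}\bm y,\bm y\rangle \ge 0, \qquad \bm y = Z_{\cal C}^\top\bm x.
\]
The lemma already asserts this, so we take it as given.

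Next, the eigenvalues of $\Mbras^{-1}M$ are real and positive, since the matrix is similar to the SPD matrix $M^{1/2}\Mbras^{-1}M^{1/2}$. Their extreme values are the extrema of the Rayleigh quotient
\[
\frac{\langle \Mbras^{-1}\bm v,\bm v\rangle}{\langle M^{-1}\bm v,\bm v\rangle}
\]
over $\bm v\neq 0$. This is the generalized eigenproblem for the pencil $(\Mbras^{-1},M^{-1})$, which has the same spectrum as $\Mbras^{-1}M$. Equivalently, substitute $\bm v = M^{1/2}\bm z$ into $M^{1/2}\Mbras^{-1}M^{1/2}$ and use $\Mbras^{-1}\succeq M^{-1}$. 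This gives
\[
\langle M^{1/2}\Mbras^{-1}M^{1/2}\bm z,\bm z\rangle \ge \langle M^{1/2}M^{-1}M^{1/2}\bm z,\bm z\rangle = \|\bm z\|^2.
\]
Hence every eigenvalue is at least $1$.

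There is no real obstacle here. The only point needing care is to make sure the Rayleigh quotient is taken with respect to the correct pencil. Note that $\Mbras^{-1}\succeq M^{-1}$ is equivalent to $\wt M_{\rm bras} \preceq M$, which gives $\langle M\bm u,\bm u\rangle/\langle \Mbras\bm u,\bm u\rangle \ge 1$. This is precisely $\alpha\ge 1$ in \eqref{eq:spec-equiv} for the pencil $(M,\Mbras)$. One would also remark that the bound is attained, hence sharp, whenever $Z_{\cal C}$ has a nontrivial kernel, for example on vectors $\bm v$ with ${\cal C}\Mbr^{-1}PD^{-1}\bm v = 0$.
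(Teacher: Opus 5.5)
Your proof is correct and follows essentially the same route as the paper: \cref{lem:bras_exact_inverse_correction} gives $\Mbras^{-1}\succeq M^{-1}$, and congruence by $M^{1/2}$ shows $M^{1/2}\Mbras^{-1}M^{1/2}\succeq I$, which is similar to $\Mbras^{-1}M$. One small slip in your optional closing remark on sharpness: the relevant condition is a nontrivial kernel of $Z_{\cal C}^{\top}={\cal C}\Mbr^{-1}PD^{-1}$, not of $Z_{\cal C}$, although the example vectors you give are the right ones.
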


\begin{proof}
By \cref{lem:bras_exact_inverse_correction}, the difference
\(\Mbras^{-1}-M^{-1} = -Z_{\cal C} S_{\cal C}^{-1}Z_{\cal C}^{\top} \succeq 0\) is symmetric positive semidefinite (SPSD).  Hence
\(\Mbras^{-1} \succeq M^{-1}\).  Congruence by \(M^{1/2}\) gives
\(M^{1/2}\Mbras^{-1}M^{1/2}\succeq I\), and this matrix is similar to
\(\Mbras^{-1}M\).
\end{proof}

The lower bound in \eqref{eq:bras_lambda_min} can be sharp; numerically, we find that equality holds in many cases, and even when it does not we often have $\lambda_{\min} \approx 1$.
Let
\(\Delta_{\cal C}:=-Z_{\cal C}S_{\cal C}^{-1}Z_{\cal C}^{\top}\), so that
\(\Mbras^{-1}=M^{-1}+\Delta_{\cal C}\) and \(\Delta_{\cal C}\succeq0\).  Then
\(\lambda_{\min}(\Mbras^{-1}M)=1\) precisely when the correction \(\Delta_{\cal C}\)
has a non-trivial nullspace, and there are at least two mechanisms by which this can occur, with the first being purely
dimensional.
Since \(-S_{\cal C}^{-1}\) is SPD,
\(\kerop (\Delta_{\cal C})=\kerop (Z_{\cal C}^{\top})\).  
Since \(Z_{\cal C}^{\top}\in\mathbb R^{m\times n}\), rank-nullity implies that
\(\kerop Z_{\cal C}^{\top}\) is non-trivial whenever \(m<n\).  Recalling that
\(m=n_{\rm br}-n\), this condition is equivalent to \(n_{\rm br}<2n\).
To make this more concrete, if the
\(i\)th conforming DOF has \(d_i\geq1\) broken copies, then
\[
    n_{\rm br}=\sum_i d_i,
    \quad
    n=\sum_i 1
\qquad
\Longrightarrow
\qquad
    m-n
    =
    n_{\rm br}-2n
    =
    \sum_i(d_i-2).
\]
Hence, degrees of freedom (DOFs) with multiplicity $d_i = 1$ contribute negatively towards \(m - n\), DOFs with multiplicity $d_i = 2$ contribute neutrally towards \(m - n\), and DOFs with $d_i > 2$ contribute positively towards \(m - n\).
Natural candidates for \(m<n\) are therefore spaces whose DOFs are concentrated on
entities with low sharing multiplicity: cell-interior and boundary-facet DOFs have \(d_i=1\), while interior-facet DOFs (edges in 2D, faces in 3D) typically have \(d_i=2\).
This makes \(\Hdiv\) a natural candidate, where DOFs are typically on facets and cell interiors.
By contrast, low-order \(H^1\) spaces in two and three dimensions often place DOFs on shared vertices, for which \(d_i>2\) typically, so the dimensional argument may fail.

A second mechanism for \(\lambda_{\min}(\Mbras^{-1} M)=1\) occurs when
\(\Mbr P\bm1 \in\range(P)\).  Let \(Q:=P D^{-1}P^{\top}\) be the
\(\ell^2\)-orthogonal projector onto \(\range(P)\) (see also
\cref{thm:bras_lambda_max_projector} below).  If
\(\Mbr P\bm1\in\range(P)\), then \(Q\Mbr P\bm1=\Mbr P\bm1\), and hence
\[
    \Mbras^{-1}M\bm1
    =
    D^{-1}P^{\top}\Mbr^{-1}Q\Mbr P\bm1
    =
    D^{-1}P^{\top}\Mbr^{-1}\Mbr P\bm1
    =
    D^{-1}P^{\top}P\bm1
    =
    \bm1 .
\]
A sufficient condition for \(\Mbr P\bm1\in\range(P)\) is that
\(M_K\bm1_K=c\bm1_K\) with \(c\) independent of \(K\), since then
\(\Mbr P\bm1=cP\bm1\).  This arises, for example, for scalar \(P_1\)
nodal elements on uniform affine simplicial meshes, for which
\(c=|K|/(d+1)\) is element-independent.

\begin{theorem}
\label{thm:bras_lambda_max_projector}
Let
\begin{equation}
\label{eq:broken_l2_projector}
    Q = P ( P^\top P )^{-1} P^{\top}
\end{equation}
be the $\ell^2$-orthogonal projector onto \(\range(P)\).  Then
\begin{equation}
\label{eq:bras_lambda_max_projector}
    \wh{\kappa}(M, \Mbras ) 
    \leq
    \lambda_{\max}(\Mbras^{-1} M)
    =
    \|Q\|_{\Mbr^{-1}}^2
    =
    \lambda_{\max}\left( \Mbr ( Q\Mbr^{-1}Q) \right).
\end{equation}
\end{theorem}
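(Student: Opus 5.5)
The argument has three parts. First, \cref{cor:bras_lambda_min} gives $\lambda_{\min}(\Mbras^{-1}M)\ge 1$. Second, we identify $\lambda_{\max}$ with a norm of a projector by a factorization and similarity argument. Third, we check the two equalities in \eqref{eq:bras_lambda_max_projector}.

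\textbf{Condition number bound.} Since $\Mbras^{-1}M$ is similar to the SPD matrix $M^{1/2}\Mbras^{-1}M^{1/2}$, its eigenvalues are exactly the generalized eigenvalues of the pencil $(M,\Mbras)$. Hence
\[
\wh\kappa(M,\Mbras)=\frac{\lambda_{\max}(\Mbras^{-1}M)}{\lambda_{\min}(\Mbras^{-1}M)}\le \lambda_{\max}(\Mbras^{-1}M),
\]
where the inequality uses $\lambda_{\min}\ge1$ from \cref{cor:bras_lambda_min}.

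\textbf{Identifying $\lambda_{\max}$.} Write $D=P^\top P$ and $R:=PD^{-1}$, so that $Q=RP^\top = PR^\top$ and $P^\top R = I$. Substituting $M=P^\top\Mbr P$ gives
\[
\Mbras^{-1}M = R^\top \Mbr^{-1} R\,P^\top \Mbr P = R^\top\Mbr^{-1}Q\,\Mbr P .
\]
Nonzero eigenvalues are invariant under cyclic permutation of factors ($XY$ and $YX$ share nonzero spectra). Moving $P$ to the front gives
\[
P R^\top \Mbr^{-1} Q\Mbr = Q\Mbr^{-1}Q\Mbr .
\]
This is an $n_{\rm br}\times n_{\rm br}$ matrix whose extra eigenvalues are zero. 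Since $\Mbras^{-1}M$ has all eigenvalues positive, we get $\lambda_{\max}(\Mbras^{-1}M)=\lambda_{\max}(Q\Mbr^{-1}Q\Mbr)$. Finally, $Q\Mbr^{-1}Q\Mbr$ and $\Mbr(Q\Mbr^{-1}Q)$ are similar via conjugation by $\Mbr$, which yields the last expression in \eqref{eq:bras_lambda_max_projector}.

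\textbf{The norm identity.} We interpret $\|Q\|_{\Mbr^{-1}}$ as the operator norm induced by $\|x\|_{\Mbr^{-1}}^2=\langle \Mbr^{-1}x,x\rangle$. Then
\[
\|Q\|_{\Mbr^{-1}}^2=\sup_x \frac{\langle \Mbr^{-1}Qx,Qx\rangle}{\langle\Mbr^{-1}x,x\rangle}=\lambda_{\max}\bigl(\Mbr\, Q\Mbr^{-1}Q\bigr),
\]
using that $Q$ is symmetric, so the numerator is $\langle Q\Mbr^{-1}Qx,x\rangle$. This is a generalized Rayleigh quotient for the pencil $(Q\Mbr^{-1}Q,\Mbr^{-1})$, whose maximum is the top eigenvalue of $\Mbr Q\Mbr^{-1}Q$.

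The only delicate point is the cyclic-permutation step: we must make sure the maximal eigenvalue is not lost among the zero eigenvalues. This holds because $\lambda_{\max}(\Mbras^{-1}M)\ge1>0$, and the norm convention must be stated explicitly. Everything else is routine algebra.
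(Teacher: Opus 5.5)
Your proposal is correct and follows essentially the paper's route. The paper factors $\Mbr Q\Mbr^{-1}Q=(\Mbr P)(\Mbras^{-1}P^{\top})$ and swaps the two factors to obtain $\Mbras^{-1}M$; your cyclic-permutation step is the same argument run in the other direction, with an extra similarity by $\Mbr$, and both proofs take the norm identity from the Rayleigh-quotient definition and the condition-number bound from \cref{cor:bras_lambda_min}. You also state explicitly that $\lambda_{\max}\ge 1>0$ keeps the top eigenvalue from being lost among the extra zero eigenvalues, which the paper leaves implicit.
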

\begin{proof}
Since \(Q^{\top}=Q\) and \(\Mbr^{-1}\) is SPD, by definition, we have
\[
\|Q\|_{\Mbr^{-1}}^2
    :=
    \sup_{\bm{x} \neq \bm 0}
    \frac{\bm{x}^{\top}Q\Mbr^{-1}Q\bm{x}}
         {\bm{x}^{\top}\Mbr^{-1}\bm{x}}
    =
    \lambda_{\max}\left( \Mbr ( Q\Mbr^{-1}Q) \right).
\]
Using \(Q=PD^{-1}P^{\top}\), we factor the previous matrix as
\[
    \Mbr Q\Mbr^{-1}Q
    =
    (\Mbr P)
    (D^{-1}P^{\top}\Mbr^{-1}P D^{-1}P^{\top}) 
    =  
    (\Mbr P)
    (\Mbras^{-1} P^{\top}) 
    \equiv A B.
\]
The nonzero eigenvalues of \(AB\) are the nonzero eigenvalues of \(BA\).  In this
case,
\[
    BA
    =
    (\Mbras^{-1} P^{\top}) (\Mbr P)
    =
    \Mbras^{-1}M.
\]
The eigenvalues of $BA$ are thus exactly the generalized eigenvalues of \(M\bm{u}=\lambda\Mbras\bm{u}\), which proves \eqref{eq:bras_lambda_max_projector}. 
The spectral condition number result follows immediately from \Cref{cor:bras_lambda_min} and \eqref{eq:spectral-kappa}.
\end{proof}

The viewpoint described above surrounding \eqref{eq:broken_kkt} is also related to domain decomposition methods such as
finite-element tearing and interconnecting (FETI) and balancing domain
decomposition by constraints (BDDC), which introduce duplicated interface
variables and constraints between nonoverlapping subdomains; see
\cite{mandel2005substructure}.  
These methods are also closely connected to the algebraic hybridization approach of Dobrev et al.~\cite{dobrev2019hybridization}.

\subsection{Piola mappings and metric-dependent BRAS bounds}
\label{sec:piola}

Let \(F_K:\widehat K\to K\) denote a sufficiently smooth, orientation-preserving map from a reference cell \(\widehat K\) to a physical mesh cell \(K\), with Jacobian matrix \(J_K\) and corresponding determinant \(\det (J_K)\).  
We write \(\bm x = F_K(\widehat{\bm x})\), and suppress the dependence of \(J_K\) on \(\widehat{\bm x}\).
Element mass matrices are computed by mapping physical-cell \(L^2(K)\) inner products back to reference-cell integrals.
Reference-cell integrands naturally inherit a factor of \(\det (J_K)\) due to this change of variables, \(\ud \bm x = \det(J_K)  \ud \bm{\wh{x}}\).
Furthermore, the reference-cell integrand depends on the finite-element space, because the reference-to-physical map must preserve the trace quantity used to impose conformity; see \cite[Section 2.1.3]{boffi2013mixed}. 
For \(H(\curl)\) elements, continuity of tangential traces is enforced, leading to the covariant Piola map,  
\(
\bm{\Phi}( \bm{x} ) = J_K^{-\top} \wh{\bm \Phi}( \wh{\bm x} )
\),
with $\bm{\Phi}(\bm{x})$ a vector field on the physical cell and $\bm{\wh \Phi}(\bm{\wh x})$ its analog on the reference cell.
For \(H(\div)\) elements, continuity of normal fluxes is enforced, leading to the contravariant Piola map, 
\(
\bm{\Phi}( \bm{x} ) = \tfrac{1}{\det (J_K)} J_K \wh{\bm \Phi}( \wh{\bm x} ).
\)

Applying the above change of variables, we find that corresponding physical mass inner products on a single element pull back as follows:
\begin{subequations}
\label{eq:piola_mass_pullbacks}
\begin{align}
    (u,v)_{L^2(K)}
    &=
    \int_{\widehat K}
    \widehat u\, \det (J_K)\, \widehat v\,\ud\widehat x,
    && u,v\in H^1(K), \label{eq:h1_mass_pullback} \\
    (\bm u,\bm v)_{L^2(K)}
    &=
    \int_{\widehat K}
    \widehat{\bm u}^{\top}
    \left(\det (J_K)J_K^{-1}J_K^{-\top}\right)
    \widehat{\bm v}\,\ud\widehat{\bm{x}},
    && \bm u,\bm v\in H(\curl;K), \label{eq:hcurl_mass_pullback} \\
    (\bm u,\bm v)_{L^2(K)}
    &=
    \int_{\widehat K}
    \widehat{\bm u}^{\top}
    \left(\frac{J_K^\top J_K}{\det (J_K)}\right)
    \widehat{\bm v}\,\ud\widehat{\bm{x}},
    && \bm u,\bm v\in H(\div;K). \label{eq:hdiv_mass_pullback}
\end{align}
\end{subequations}
Thus scalar \(H^1\) mass matrices depend on the element map only through the scalar Jacobian factor \(\det (J_K)\), while \(H(\curl)\) and \(H(\div)\) mass matrices depend on tensor-valued Piola metric weights.  

The following results compare these metric effects algebraically.  We use \(G_K\) to denote the scalar or tensor coefficient that appears in the pulled-back reference-cell mass integrand in \cref{eq:piola_mass_pullbacks}. Let \(M^I\) denote the mass matrix assembled on the reference cells with the identity coefficient, and let \(M^G\) denote the mass matrix assembled on the same reference finite element space with coefficients \(G_K\). Equivalently, \(M^G\) is the physical mass matrix on the mapped mesh. The matrices \(\Mbr^I\) and \(\Mbr^G\) denote the corresponding unassembled mass matrices on the associated broken space.

\begin{lemma}
\label{lem:bras_metric_endpoint}
Let \(M^I=P^\top \Mbr^I P\) and \(M^G=P^\top \Mbr^G P\), and let
\((\Mbras^I)^{-1}\) and \((\Mbras^G)^{-1}\) denote the corresponding BRAS
inverse approximations.  
Assume that $(\Mbr^I, \Mbr^G)$ are spectrally equivalent as in \eqref{eq:spec-equiv} with constants $0 < \alpha \leq \beta$. Then
\[
    \lambda_{\max}\!\left((\Mbras^G)^{-1}M^G\right)
    \leq
    \frac{\beta}{\alpha}
    \lambda_{\max}\!\left((\Mbras^I)^{-1}M^I\right).
\]
\end{lemma}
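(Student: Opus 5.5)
The plan is to work entirely through the projector characterization of \cref{thm:bras_lambda_max_projector}. The key observation is that the injection \(P\), and hence the \(\ell^2\)-orthogonal projector \(Q = P(P^\top P)^{-1}P^\top\), depends only on the local-to-global DOF map. It is therefore identical for the identity-coefficient and metric-coefficient problems. Only the block-diagonal broken mass matrix changes between the two settings. Applying \eqref{eq:bras_lambda_max_projector} to each setting gives
\[
\lambda_{\max}\!\left((\Mbras^G)^{-1}M^G\right)
= \sup_{\bm x\neq \bm 0}\frac{\bm x^\top Q (\Mbr^G)^{-1} Q\bm x}{\bm x^\top (\Mbr^G)^{-1}\bm x},
\]
and the same expression with \(I\) in place of \(G\).

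The first step is to transfer the assumed spectral equivalence to the inverses. By hypothesis, \(\alpha \Mbr^G \preceq \Mbr^I \preceq \beta \Mbr^G\) in the Loewner order. Since inversion is order-reversing on SPD matrices, this gives
\[
\alpha (\Mbr^I)^{-1} \preceq (\Mbr^G)^{-1} \preceq \beta (\Mbr^I)^{-1}.
\]
Concretely, \(\Mbr^G \succeq \beta^{-1}\Mbr^I\) yields \((\Mbr^G)^{-1}\preceq \beta(\Mbr^I)^{-1}\), and the lower bound follows symmetrically. This is the only step with any content: one must keep the direction of the constants straight so that the ratio comes out as \(\beta/\alpha\) rather than \(\alpha/\beta\).

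The second step bounds the Rayleigh quotient for fixed \(\bm x \neq \bm 0\). For the numerator, apply the upper inverse bound to the vector \(Q\bm x\), using \(Q^\top = Q\):
\[
\bm x^\top Q(\Mbr^G)^{-1}Q\bm x \le \beta\, \bm x^\top Q(\Mbr^I)^{-1}Q\bm x .
\]
For the denominator, apply the lower inverse bound:
\[
\bm x^\top(\Mbr^G)^{-1}\bm x \ge \alpha\, \bm x^\top (\Mbr^I)^{-1}\bm x > 0 .
\]
Dividing shows that the \(G\)-quotient is at most \(\beta/\alpha\) times the \(I\)-quotient, which in turn is at most \(\|Q\|_{(\Mbr^I)^{-1}}^2\). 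Taking the supremum over \(\bm x\) and invoking \cref{thm:bras_lambda_max_projector} once more for the \(I\) problem yields the claimed inequality.

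I expect no real obstacle. The main care points are the order reversal under inversion and confirming that \(Q\) is genuinely shared between the two problems. The alternative route, comparing \(\bm u^\top M\bm u / \bm u^\top \Mbras \bm u\) directly, would be awkward because \(\Mbras\) itself is only defined through an inverse. The projector form avoids this by letting the metric enter only through \(\Mbr^{-1}\).
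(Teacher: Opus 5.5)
Your proposal is correct and matches the paper's proof step for step. Both arguments transfer the broken-space spectral equivalence to the inverses, note that \(Q\) is shared between the two problems, and bound the numerator and denominator of the Rayleigh quotient from \cref{thm:bras_lambda_max_projector} separately. The only difference is orientation, and it is cosmetic. You read the hypothesis literally as \(\alpha\Mbr^G\preceq\Mbr^I\preceq\beta\Mbr^G\), which gives \(\alpha(\Mbr^I)^{-1}\preceq(\Mbr^G)^{-1}\preceq\beta(\Mbr^I)^{-1}\). The paper's proof, like its later corollary, uses \(\alpha\Mbr^I\preceq\Mbr^G\preceq\beta\Mbr^I\), which gives constants \(1/\beta\) and \(1/\alpha\). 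Since only the ratio \(\beta/\alpha\) enters the bound, both readings give the same result.
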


\begin{proof}
The spectral-equivalence assumption implies
\[
    \frac{1}{\beta}(\Mbr^I)^{-1}
    \preceq
    (\Mbr^G)^{-1}
    \preceq
    \frac{1}{\alpha}(\Mbr^I)^{-1}.
\]
Since the conforming-to-broken map is the same in both cases, the projector
\(Q=P(P^\top P)^{-1}P^\top\) is also the same.  Using the characterization
from \cref{thm:bras_lambda_max_projector},
\[
\begin{aligned}
    \lambda_{\max}\!\left((\Mbras^G)^{-1}M^G\right)
    &=
    \sup_{\bm x\neq\bm 0}
    \frac{\bm x^\top Q(\Mbr^G)^{-1}Q\bm x}
         {\bm x^\top(\Mbr^G)^{-1}\bm x} \\
    &\leq
    \frac{\beta}{\alpha}
    \sup_{\bm x\neq\bm 0}
    \frac{\bm x^\top Q(\Mbr^I)^{-1}Q\bm x}
         {\bm x^\top(\Mbr^I)^{-1}\bm x} \\
    &=
    \frac{\beta}{\alpha}
    \lambda_{\max}\!\left((\Mbras^I)^{-1}M^I\right).
\end{aligned}
\]
\end{proof}

\begin{corollary}
\label{cor:bras_piola_metric_contrast}
Assume that there exist constants \(0<\alpha\leq\beta\) such that
\[
    \alpha I \preceq G_K(\wh{\bm{x}}) \preceq \beta I
    \qquad\text{for all }K\in\Th
\]
and almost every \(\wh{\bm x}\in\widehat K\).  Then the corresponding BRAS
operators satisfy
\[
    \lambda_{\max}\!\left((\Mbras^G)^{-1}M^G\right)
    \leq
    \frac{\beta}{\alpha}
    \lambda_{\max}\!\left((\Mbras^I)^{-1}M^I\right).
\]
If, in addition, the lower-bound estimate in \cref{cor:bras_lambda_min} is
sharp for both the \(I\)- and \(G\)-metric operators, so that
\(
    \lambda_{\min}\!\left((\Mbras^G)^{-1}M^G\right)
    =
    \lambda_{\min}\!\left((\Mbras^I)^{-1}M^I\right)
    =
    1,
\)
then
\[
    \wh{\kappa}\!\left(M^G,\Mbras^G\right)
    \leq
    \frac{\beta}{\alpha}
    \wh{\kappa}\!\left(M^I,\Mbras^I\right).
\]
\end{corollary}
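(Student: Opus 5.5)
The plan is to reduce the corollary to \cref{lem:bras_metric_endpoint}. The only thing to check is that the pointwise bound \(\alpha I\preceq G_K\preceq\beta I\) on the metric coefficient gives spectral equivalence of the broken mass matrices \((\Mbr^G,\Mbr^I)\) with the same constants.

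Since both broken matrices are block diagonal over \(K\in\Th\), it suffices to work one element at a time. Fix \(K\) and a local coefficient vector \(\bm u_K\), and let \(\widehat{\bm u}=\sum_j(\bm u_K)_j\widehat{\bm\phi}_j\) be the corresponding reference-cell function (scalar in the \(H^1\) case). Then
\[
\bm u_K^\top M_K^G\bm u_K=\int_{\widehat K}\widehat{\bm u}^\top G_K\widehat{\bm u}\ud\widehat{\bm x}
\quad\text{and}\quad
\bm u_K^\top M_K^I\bm u_K=\int_{\widehat K}|\widehat{\bm u}|^2\ud\widehat{\bm x}.
\]
The pointwise bound \(\alpha|\widehat{\bm u}|^2\le\widehat{\bm u}^\top G_K\widehat{\bm u}\le\beta|\widehat{\bm u}|^2\), which holds almost everywhere, integrates to
\[
\alpha\,\bm u_K^\top M_K^I\bm u_K\le \bm u_K^\top M_K^G\bm u_K\le\beta\,\bm u_K^\top M_K^I\bm u_K .
\]
Summing over \(K\) gives \(\alpha\Mbr^I\preceq\Mbr^G\preceq\beta\Mbr^I\). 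In the notation of \eqref{eq:spec-equiv} with \(A=\Mbr^G\) and \(B=\Mbr^I\), this is exactly the hypothesis of \cref{lem:bras_metric_endpoint}.

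There is one orientation issue. The lemma's proof uses \(\beta^{-1}(\Mbr^I)^{-1}\preceq(\Mbr^G)^{-1}\preceq\alpha^{-1}(\Mbr^I)^{-1}\), which is exactly what inverting \(\alpha\Mbr^I\preceq\Mbr^G\preceq\beta\Mbr^I\) gives. Applying the lemma then yields the first inequality directly, and the ratio \(\beta/\alpha\) is unchanged whichever orientation is used.

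For the condition number statement, use \eqref{eq:spectral-kappa} to write
\[
\wh\kappa(M^G,\Mbras^G)=\frac{\lambda_{\max}\bigl((\Mbras^G)^{-1}M^G\bigr)}{\lambda_{\min}\bigl((\Mbras^G)^{-1}M^G\bigr)}.
\]
Under the sharpness assumption the denominators for both the \(G\)- and \(I\)-metric operators equal \(1\), so each condition number equals the corresponding \(\lambda_{\max}\). The first inequality then transfers directly.

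The only potential subtlety is the spectral equivalence step: the reference basis and the map \(P\) must be shared between the \(I\) and \(G\) assemblies, so that the same coefficient vector represents the same reference function. This holds by the construction of \(M^G\) described before \cref{lem:bras_metric_endpoint}.
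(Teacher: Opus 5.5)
Your proposal is correct and follows the paper's proof step for step: you integrate the a.e.\ pointwise bound on \(G_K\) to get \(\alpha M_K^I\preceq M_K^G\preceq\beta M_K^I\), assemble blockwise to \(\alpha\Mbr^I\preceq\Mbr^G\preceq\beta\Mbr^I\), apply \cref{lem:bras_metric_endpoint}, and then use \(\lambda_{\min}=1\) in \eqref{eq:spectral-kappa} for the condition-number bound. Your remark on orientation is also accurate and harmless: the lemma states its hypothesis for the pencil in the opposite orientation from the one its proof uses, but swapping the pencil sends \((\alpha,\beta)\) to \((1/\beta,1/\alpha)\) and leaves \(\beta/\alpha\) unchanged.
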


\begin{proof}
The pointwise metric bounds imply the corresponding elementwise mass bounds
\[
    \alpha M_K^I \preceq M_K^G \preceq \beta M_K^I
    \qquad\text{for all }K\in\Th .
\]
Since the broken mass matrices are assembled blockwise over elements, this gives
\[
    \alpha \Mbr^I \preceq \Mbr^G \preceq \beta \Mbr^I .
\]
The BRAS maximum eigenvalue bounds then follow directly from
\cref{lem:bras_metric_endpoint}, and the spectral condition number result follows by definition from \eqref{eq:spectral-kappa}.
\end{proof}

In particular, for the pullbacks in \cref{eq:piola_mass_pullbacks}, the
metric contrast \(\beta/\alpha\) may be bounded by
\[
    \frac{\beta}{\alpha}
    \leq
    \begin{cases}
        \displaystyle
        \frac{\sup_{K\in\Th} \det (J_K)}{\inf_{K\in\Th} \det (J_K)},
        & H^1, \\[1.25ex]
        \displaystyle
        \frac{\sup_{K\in\Th}\lambda_{\max}(\det (J_K)J_K^{-1}J_K^{-\top})}
             {\inf_{K\in\Th}\lambda_{\min}(\det (J_K)J_K^{-1}J_K^{-\top})},
        & H(\curl), \\[2ex]
        \displaystyle
        \frac{\sup_{K\in\Th}\lambda_{\max}(J_K^\top J_K/\det (J_K))}
             {\inf_{K\in\Th}\lambda_{\min}(J_K^\top J_K/\det (J_K))},
        & H(\div).
    \end{cases}
\]

\section{Numerical results: Mass matrices}
\label{sec:mass_numerics}

This section studies solve performance for the mass system
\(M\bm{u}=\bm{f}\) using preconditioned CG and damped fixed-point iterations. Although consistently slower than CG as a standalone solver, fixed-point iterations can still be useful to avoid the parallel communication costs associated with CG dot products or inside of larger coupled block preconditioners. In addition, our interest is not only in solving an isolated mass system, but also in using BRAS as an approximate inverse inside larger preconditioners; see \cref{sec:block_preconditioning,sec:block_preconditioning_numerics}.  
In that setting, the standalone condition number, and the fixed-point behavior it controls, are often a cleaner measure of preconditioner quality than CG iterations. 

Let \(\Mwtinv\) denote the inverse approximation used as a preconditioner. Given a damping parameter \(\omega>0\), the fixed-point iteration is
\[
    \bm{u}_{k+1}
    =
    \bm{u}_k
    +
    \omega \Mwtinv
    \left(\bm{f} - M\bm{u}_k\right),
    \qquad
    \Mwtinv
    \in
    \bigl\{ \Mdiag^{-1}, \, \Mbras^{-1}
    \bigr\}.
\]
The fixed-point iteration is convergent for any
\(\omega\in(0,2/\lambda_{\max}(\Mwtinv M))\).  The choice minimizing the
spectral radius of the error propagation matrix \(I-\omega\Mwtinv M\) is
\[
    \omega_{\rm opt}(\Mwtinv)
    =
    \frac{2}
    {\lambda_{\min}(\Mwtinv M)+\lambda_{\max}(\Mwtinv M)} .
\]
Thus any convergent choice requires an estimate of
\(\lambda_{\max}(\Mwtinv M)\), while the optimal choice also requires an estimate of \(\lambda_{\min}(\Mwtinv M)\).  In general, maximum
eigenvalues are much cheaper to estimate than minimum eigenvalues: power and
Arnoldi-type iterations are usually sufficient for the former, while the latter often require inverse iterations and therefore solves with \(M\) shifted by a diagonal matrix.  
This motivates the following pragmatic weight choices used in the experiments here:
\[
    \omega_{\rm diag}
    =
    \frac{1}{\lambda_{\max}(\Mdiag^{-1} M)},
    \qquad
    \omega_{\rm bras}
    =
    \frac{2}{1+\lambda_{\max}(\Mbras^{-1} M)} .
\]
For BRAS this choice often yields the optimal weight or not far from it, using the lower spectral bound
\(\lambda_{\min}(\Mbras^{-1} M) \geq 1\) from
\cref{cor:bras_lambda_min} that we note is generally fairly tight. For the diagonal method we use only the upper
spectral estimate, since we are unaware of any analogous lower spectral estimate. 
The largest eigenvalue in both weights is estimated numerically from the corresponding preconditioned operator using the power iteration.

All finite elements are implemented using the Firedrake library
\cite{FiredrakeUserManual}, and we use Firedrake element-family names in the tables and text.
For \(H^1\), ``Lagrange'' and ``Bernstein'' denote
\(H^1\)-conforming piecewise-\(P_p\) spaces represented using nodal and Bernstein bases, respectively; the nodal basis uses Firedrake's spectral variant, based on Gauss--Lobatto--Legendre (GLL) points.
For \(H(\operatorname{curl})\),
``\(\mathrm{N1curl}\)'' and ``\(\mathrm{N2curl}\)'' denote the first- and second-kind N\'ed\'elec  elements, respectively, and for \(H(\operatorname{div})\)
``\(\mathrm{RT}\)'' and ``\(\mathrm{BDM}\)'' denote the Raviart--Thomas and
Brezzi--Douglas--Marini elements, respectively. 
All meshes use simplices (triangles in 2D and tetrahedra in 3D). For solve tests, the number of mesh elements is chosen so that the conforming problem has approximately \(10^6\) DOFs, while spectral condition number calculations use a smaller problem with approximately \(5\times 10^3\) DOFs to enable direct calculation. Solves are done on a random right-hand side \(\bm{f}\).  
Both CG and fixed-point iterations are initialized with \(\bm{u}_0=\bm{0}\) and are terminated when the residual \(\bm{r}_k=\bm{f}-M\bm{u}_k\) satisfies
\(\|\bm{r}_k\|_2/\|\bm{r}_0\|_2 \le 10^{-10}\), or after \(5000\)
iterations.  Solve times do not include construction of the preconditioner or
estimation of \(\lambda_{\max}\).

\providecommand{\fail}[1]{#1^{*}}

\begin{table}[t!]
\centering
\begingroup
\scriptsize
\setlength{\tabcolsep}{1.8pt}
\renewcommand{\arraystretch}{0.95}
\begin{tabular*}{\textwidth}{@{\extracolsep{\fill}} c c cc cc c cc cc @{}}
\toprule
\multicolumn{11}{c}{Space \(=H^1\)} \\
\midrule
&
\multicolumn{5}{c}{\(\mathrm{Lagrange}\)}
&
\multicolumn{5}{c}{\(\mathrm{Bernstein}\)} \\
\cmidrule(lr){2-6}\cmidrule(lr){7-11}
\(p\)
& \(\hat{\kappa}\)
& \multicolumn{2}{c}{Fixed point}
& \multicolumn{2}{c}{CG}
& \(\hat{\kappa}\)
& \multicolumn{2}{c}{Fixed point}
& \multicolumn{2}{c}{CG} \\
\cmidrule(lr){3-4}\cmidrule(lr){5-6}
\cmidrule(lr){8-9}\cmidrule(lr){10-11}
&
&
Its. & \(t\)
& Its. & \(t\)
&
&
Its. & \(t\)
& Its. & \(t\) \\
\midrule
\multicolumn{11}{c}{Two dimensions} \\
\midrule
1 & \(4.0/1.6\) & \(73/15\) & \(0.81/0.23\) & \(22/11\) & \(0.32/0.21\)
  & \(4.0/1.6\) & \(73/15\) & \(0.80/0.23\) & \(22/11\) & \(0.29/0.21\) \\
2 & \(5.2/1.8\) & \(99/18\) & \(1.3/0.39\) & \(25/13\) & \(0.41/0.30\)
  & \(16/3.8\) & \(310/43\) & \(4.1/0.87\) & \(45/21\) & \(0.74/0.49\) \\
3 & \(5.5/1.7\) & \(100/18\) & \(1.7/0.50\) & \(26/12\) & \(0.53/0.39\)
  & \(58/5.7\) & \(1175/65\) & \(20/1.9\) & \(90/27\) & \(1.8/0.82\) \\
4 & \(5.5/1.7\) & \(103/17\) & \(2.2/0.66\) & \(26/12\) & \(0.62/0.47\)
  & \(230/5.0\) & \(4575/59\) & \(100/2.3\) & \(180/23\) & \(4.5/0.96\) \\
\midrule
\multicolumn{11}{c}{Three dimensions} \\
\midrule
1 & \(5.0/1.8\) & \(91/18\) & \(1.5/0.56\) & \(24/12\) & \(0.46/0.33\)
  & \(5.0/1.8\) & \(91/18\) & \(1.5/0.53\) & \(24/12\) & \(0.45/0.35\) \\
2 & \(17/3.0\) & \(330/31\) & \(8.5/1.4\) & \(48/18\) & \(1.3/0.81\)
  & \(24/4.9\) & \(447/56\) & \(11/2.5\) & \(56/25\) & \(1.5/1.2\) \\
3 & \(13/2.9\) & \(243/31\) & \(11/2.4\) & \(41/18\) & \(1.7/1.5\)
  & \(98/7.8\) & \(1909/91\) & \(75/6.8\) & \(117/32\) & \(5.0/2.4\) \\
4 & \(22/3.5\) & \(397/38\) & \(24/4.4\) & \(52/20\) & \(3.4/2.4\)
  & \(420/11\) & \(\fail{5000}/134\) & \(\fail{300}/15\) & \(244/39\) & \(15/4.6\) \\
\bottomrule
\end{tabular*}
\endgroup
\caption{Mass-solve performance for \(H^1\) spaces. Entries are diagonal/BRAS. The columns report the estimated spectral condition number \(\hat{\kappa}\), iteration counts, and solve times \(t\) for fixed-point and CG solves. Starred entries reached the \(5000\)-iteration limit.}
\label{tab:h1_mass_solve}
\end{table}

\providecommand{\fail}[1]{#1^{*}}

\begin{table}[b!]
\centering
\begingroup
\scriptsize
\setlength{\tabcolsep}{1.8pt}
\renewcommand{\arraystretch}{0.95}
\begin{tabular*}{\textwidth}{@{\extracolsep{\fill}} c c cc cc c cc cc @{}}
\toprule
\multicolumn{11}{c}{Space \(=H(\operatorname{curl})\)} \\
\midrule
&
\multicolumn{5}{c}{\(\mathrm{N1curl}\)}
&
\multicolumn{5}{c}{\(\mathrm{N2curl}\)} \\
\cmidrule(lr){2-6}\cmidrule(lr){7-11}
\(p\)
& \(\hat{\kappa}\)
& \multicolumn{2}{c}{Fixed point}
& \multicolumn{2}{c}{CG}
& \(\hat{\kappa}\)
& \multicolumn{2}{c}{Fixed point}
& \multicolumn{2}{c}{CG} \\
\cmidrule(lr){3-4}\cmidrule(lr){5-6}
\cmidrule(lr){8-9}\cmidrule(lr){10-11}
&
&
Its. & \(t\)
& Its. & \(t\)
&
&
Its. & \(t\)
& Its. & \(t\) \\
\midrule
\multicolumn{11}{c}{Two dimensions} \\
\midrule
1 & \(3.0/1.3\) & \(54/12\) & \(0.48/0.15\) & \(18/9\) & \(0.22/0.13\)
  & \(13/3.4\) & \(240/38\) & \(2.9/0.75\) & \(39/19\) & \(0.61/0.41\) \\
2 & \(9.6/1.7\) & \(180/18\) & \(2.5/0.41\) & \(32/12\) & \(0.54/0.29\)
  & \(11/2.7\) & \(222/30\) & \(4.0/0.85\) & \(39/17\) & \(0.84/0.55\) \\
3 & \(14/2.0\) & \(269/22\) & \(5.3/0.71\) & \(41/14\) & \(0.91/0.55\)
  & \(20/3.0\) & \(391/33\) & \(9.5/1.4\) & \(51/18\) & \(1.3/0.82\) \\
4 & \(18/2.5\) & \(361/28\) & \(9.9/1.4\) & \(48/16\) & \(1.4/0.87\)
  & \(31/3.4\) & \(629/40\) & \(20/2.5\) & \(66/20\) & \(2.4/1.3\) \\
\midrule
\multicolumn{11}{c}{Three dimensions} \\
\midrule
1 & \(6.0/1.8\) & \(107/17\) & \(1.9/0.50\) & \(27/12\) & \(0.55/0.39\)
  & \(36/7.4\) & \(638/82\) & \(18/4.2\) & \(68/31\) & \(2.1/1.7\) \\
2 & \(83/6.4\) & \(1388/72\) & \(47/4.5\) & \(102/29\) & \(3.7/1.9\)
  & \(170/10\) & \(3148/119\) & \(170/12\) & \(151/38\) & \(8.2/3.9\) \\
3 & \(140/8.4\) & \(2590/102\) & \(180/13\) & \(137/35\) & \(9.6/4.8\)
  & \(210/13\) & \(3793/157\) & \(330/27\) & \(170/45\) & \(15/8.0\) \\
4 & \(280/13\) & \(\fail{5000}/168\) & \(\fail{540}/35\) & \(197/46\) & \(22/10\)
  & \(450/20\) & \(\fail{5000}/245\) & \(\fail{690}/65\) & \(252/56\) & \(36/15\) \\
\bottomrule
\end{tabular*}
\endgroup
\caption{Mass-solve performance for \(H(\operatorname{curl})\) spaces. Entries are diagonal/BRAS. The columns report the estimated spectral condition number \(\hat{\kappa}\), iteration counts, and solve times \(t\) for fixed-point and CG solves. Starred entries reached the \(5000\)-iteration limit.}
\label{tab:hcurl_mass_solve}
\end{table}

\providecommand{\fail}[1]{#1^{*}}

\begin{table}[t!]
\centering
\begingroup
\scriptsize
\setlength{\tabcolsep}{1.8pt}
\renewcommand{\arraystretch}{0.95}
\begin{tabular*}{\textwidth}{@{\extracolsep{\fill}} c c cc cc c cc cc @{}}
\toprule
\multicolumn{11}{c}{Space \(=H(\operatorname{div})\)} \\
\midrule
&
\multicolumn{5}{c}{\(\mathrm{RT}\)}
&
\multicolumn{5}{c}{\(\mathrm{BDM}\)} \\
\cmidrule(lr){2-6}\cmidrule(lr){7-11}
\(p\)
& \(\hat{\kappa}\)
& \multicolumn{2}{c}{Fixed point}
& \multicolumn{2}{c}{CG}
& \(\hat{\kappa}\)
& \multicolumn{2}{c}{Fixed point}
& \multicolumn{2}{c}{CG} \\
\cmidrule(lr){3-4}\cmidrule(lr){5-6}
\cmidrule(lr){8-9}\cmidrule(lr){10-11}
&
&
Its. & \(t\)
& Its. & \(t\)
&
&
Its. & \(t\)
& Its. & \(t\) \\
\midrule
\multicolumn{11}{c}{Two dimensions} \\
\midrule
1 & \(3.0/1.3\) & \(54/12\) & \(0.49/0.14\) & \(18/9\) & \(0.22/0.13\)
  & \(13/3.4\) & \(240/38\) & \(3.0/0.74\) & \(39/19\) & \(0.58/0.46\) \\
2 & \(9.6/1.7\) & \(180/18\) & \(2.4/0.37\) & \(32/12\) & \(0.54/0.29\)
  & \(11/2.7\) & \(222/30\) & \(3.8/0.85\) & \(39/17\) & \(0.77/0.51\) \\
3 & \(14/2.0\) & \(269/22\) & \(5.3/0.75\) & \(41/14\) & \(0.94/0.54\)
  & \(20/3.0\) & \(391/34\) & \(9.3/1.4\) & \(51/18\) & \(1.4/0.80\) \\
4 & \(18/2.5\) & \(361/28\) & \(9.7/1.4\) & \(48/16\) & \(1.5/0.81\)
  & \(31/3.4\) & \(629/40\) & \(20/2.4\) & \(66/20\) & \(2.4/1.2\) \\
\midrule
\multicolumn{11}{c}{Three dimensions} \\
\midrule
1 & \(4.0/1.6\) & \(77/15\) & \(0.80/0.22\) & \(22/11\) & \(0.30/0.20\)
  & \(39/7.1\) & \(720/79\) & \(14/2.6\) & \(69/30\) & \(1.6/1.1\) \\
2 & \(27/2.3\) & \(497/26\) & \(10/0.96\) & \(59/16\) & \(1.4/0.60\)
  & \(32/5.4\) & \(657/63\) & \(25/4.3\) & \(67/26\) & \(2.7/1.9\) \\
3 & \(38/3.0\) & \(701/35\) & \(28/2.7\) & \(71/19\) & \(3.1/1.5\)
  & \(210/5.7\) & \(4026/69\) & \(260/8.2\) & \(169/28\) & \(12/3.6\) \\
4 & \(49/3.7\) & \(916/44\) & \(66/5.9\) & \(82/22\) & \(6.5/3.2\)
  & \(310/6.5\) & \(\fail{5000}/79\) & \(\fail{510}/16\) & \(215/30\) & \(22/6.1\) \\
\bottomrule
\end{tabular*}
\endgroup
\caption{Mass-solve performance for \(H(\operatorname{div})\) spaces. Entries are diagonal/BRAS. The columns report the estimated spectral condition number \(\hat{\kappa}\), iteration counts, and solve times \(t\) for fixed-point and CG solves. Starred entries reached the \(5000\)-iteration limit.}
\label{tab:hdiv_mass_solve}
\end{table}

\Cref{tab:hdiv_mass_solve,tab:hcurl_mass_solve,tab:h1_mass_solve} show spectral condition numbers, solve times, and iteration counts of $\Mdiag$ and $\Mbras$ for two sets of basis functions associated with $H^1, H(\operatorname{curl})$ and $H(\operatorname{div})$, respectively, and finite-element orders $p\in[1,4]$. 
We report both solve time and iteration counts due to the different cost of applying $\Mdiag^{-1}$ vs. $\Mbras^{-1}$, with the latter costing the same as application of $M$. To compare, we consider speedup $S$ as the ratio of the $\Mdiag$-based solve execution time to the $\Mbras$-based solve execution time, and performance improvement $I$ as the percentage by which the solve is faster, where $I = 100(S-1)$. 
Throughout this paper, all reported solve times are mean values over three solves.
The tables show that BRAS reduces the spectral condition number across the tested spaces, with the largest relative improvements occurring in high-order spaces. Consistent with this, across all scenarios tested, preconditioning with $\Mbras^{-1}$ yields meaningful speedups over $\Mdiag^{-1}$. For CG solves, almost all cases see at least a 40\% speedup, with certain instances such as 4th-order Bernstein bases seeing a $4\times$ speedup  (or $\approx 300\%$ improvement in performance). Fixed-point improvements are even more dramatic, with $\Mbras^{-1}$ yielding at least a $3\times$ speedup (or $\approx 200\%$ improvement in performance) in almost all cases, and reaching as high as 20--40$\times$ faster in certain cases of 4th-order Bernstein and BDM elements. In several cases the BRAS fixed-point solve is actually faster than diagonal-preconditioned CG.
We also note that in four of the six $p=4$ tests in 3D, the fixed-point iteration did not converge in 5000 iterations with $\Mdiag^{-1}$, while $\Mbras^{-1}$ yielded convergence on the order of 100 iterations.

\begin{figure}[b!]
    \centering
    \includegraphics[width=1.0\linewidth]{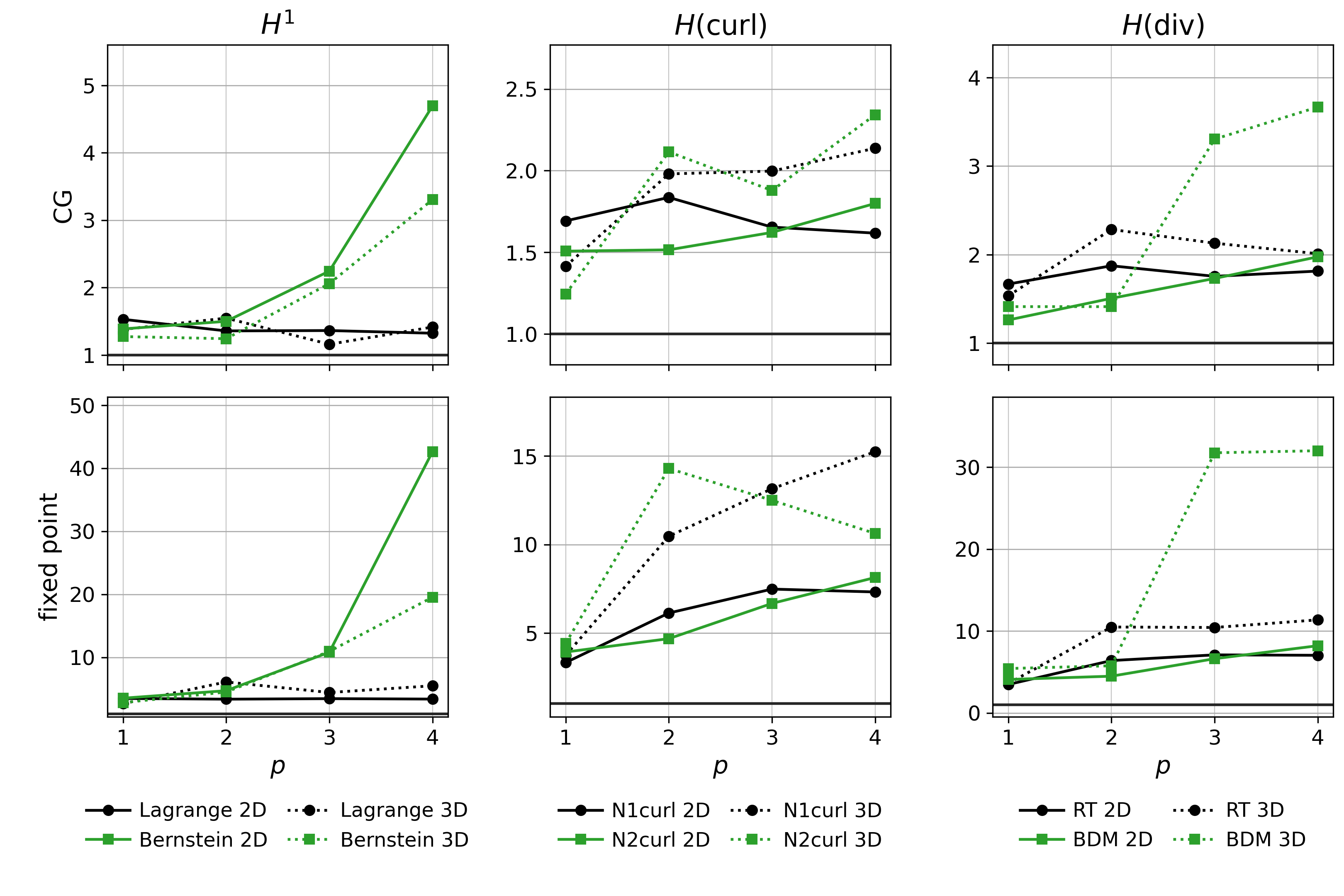}
    \caption{
Summary of the mass-solve timings reported across \cref{tab:h1_mass_solve,tab:hcurl_mass_solve,tab:hdiv_mass_solve}.
The top row reports the CG solve-time ratio \(t_{\rm diag}/t_{\rm bras}\) for polynomial
degrees \(p=1,2,3,4\), and the bottom row reports the analog for the fixed-point solve times.  
The columns correspond to \(H^1\),
\(H(\operatorname{curl})\), and \(H(\operatorname{div})\), respectively, and include
both finite element families and both spatial dimensions shown in the tables. Values larger than one indicate the associated CG or fixed-point solve time for BRAS is faster than that of diagonal preconditioning.
}
\label{fig:mass-speedups}
\end{figure}

\cref{fig:mass-speedups} shows the speedup provided by BRAS by plotting the solve-time ratio \(t_{\rm diag}/t_{\rm bras}\) for the CG and fixed-point runs in each table. In all cases considered, BRAS yields faster CG and fixed-point solve times than diagonal preconditioning, starting at 30--50\% improvements for $p=1$, and generally increasing with \(p\), significantly in some cases. We again point out that BRAS improvements in the fixed-point setting are much stronger than those for CG.

Wathen and Rees \cite{wathen2008chebyshev} observed that, for diagonally preconditioned low-order
\(H^1\) mass matrices, Chebyshev iteration gives convergence very
similar to CG when accurate spectral bounds are available
\cite{wathen2008chebyshev}.  
We observe the same behavior in the
broader setting considered here, including for both diagonal and BRAS
preconditioning.  In particular, across
essentially all of our tests, the observed CG convergence rates are remarkably close to the standard worst-case estimate
\begin{equation*}
    \frac{\|\bm{e}_k\|_{A_{{\wt M}}}}
         {\|\bm{e}_0\|_{A_{{\wt M}}}}
    \leq
    2
    \left(
        \frac{\sqrt{\wh{\kappa}}-1}{\sqrt{\wh{\kappa}}+1}
    \right)^k,
\end{equation*}
with \(A_{{\wt M}}={\wt M}^{-1/2}M{\wt M}^{-1/2}\).
This is precisely the Chebyshev upper bound obtained from the spectral
interval of the preconditioned operator. This makes
Chebyshev iteration an attractive alternative to CG in practice, because it is linear, and it avoids the global dot products required by CG, which can
be advantageous on parallel machines.  To our knowledge, this observation has not previously been emphasized for high-order \(H^1\), \(H(\curl)\), and \(H(\div)\) mass matrices. 

Next we recall the theory of \cref{sec:piola}, which considered mass conditioning under mesh deformation.
To provide numerical support for \cref{cor:bras_piola_metric_contrast} therein, we
consider preconditioning an \(H(\div)\) mass on a mesh obtained by rotating and stretching the unit-square simplicial mesh. 
The Jacobian determinant in this example satisfies \(\det (J_K)=1\) on every cell, so that it isolates the effect of the tensor-valued metric weight appearing
in the Piola pullback for the \(H(\div)\) mass matrix.  Given a stretch
factor \(r\geq 1\), we consider the coordinate map
\begin{align} \label{eq:rot_stretch_map}
F_r(\bm{x})
=
\bm{c}
+
R_{\pi/4}
\begin{pmatrix}
r & 0 \\
0 & r^{-1}
\end{pmatrix}
R_{\pi/4}^{\top}
(\bm{x}-\bm{c}),
\qquad
\bm{c}=(1/2,1/2),
\end{align}
where \(R_{\pi/4}\) denotes rotation by \(\pi/4\).

The Jacobian for this affine map is \(J_K=DF_r\), with \(\det (J_K)=1\), and the singular values of
\(J_K\) are \(r\) and \(r^{-1}\).  Therefore the \(H(\div)\) Piola metric
\(G_K=J_K^\top J_K/\det (J_K)\) has eigenvalues \(r^2\) and \(r^{-2}\), so the metric
contrast appearing in \cref{cor:bras_piola_metric_contrast} satisfies
\[
    \frac{\beta}{\alpha}
    \leq
    \frac{\sup_{K\in\Th}\lambda_{\max}(J_K^\top J_K/\det (J_K))}
             {\inf_{K\in\Th}\lambda_{\min}(J_K^\top J_K/\det (J_K))}
    =
    \frac{\lambda_{\max}(J_K^\top J_K)}
             {\lambda_{\min}(J_K^\top J_K)}
    =
    r^4 .
\]
Thus, the values \(r=1,3,5,7\) measured in the experiment below correspond to metric contrasts of \(1,81,625,\) and \(2401\), respectively.

\begin{figure}[t!]
    \centering
    \begin{minipage}[c]{0.54\linewidth}
        \centering
        \includegraphics[width=\linewidth]{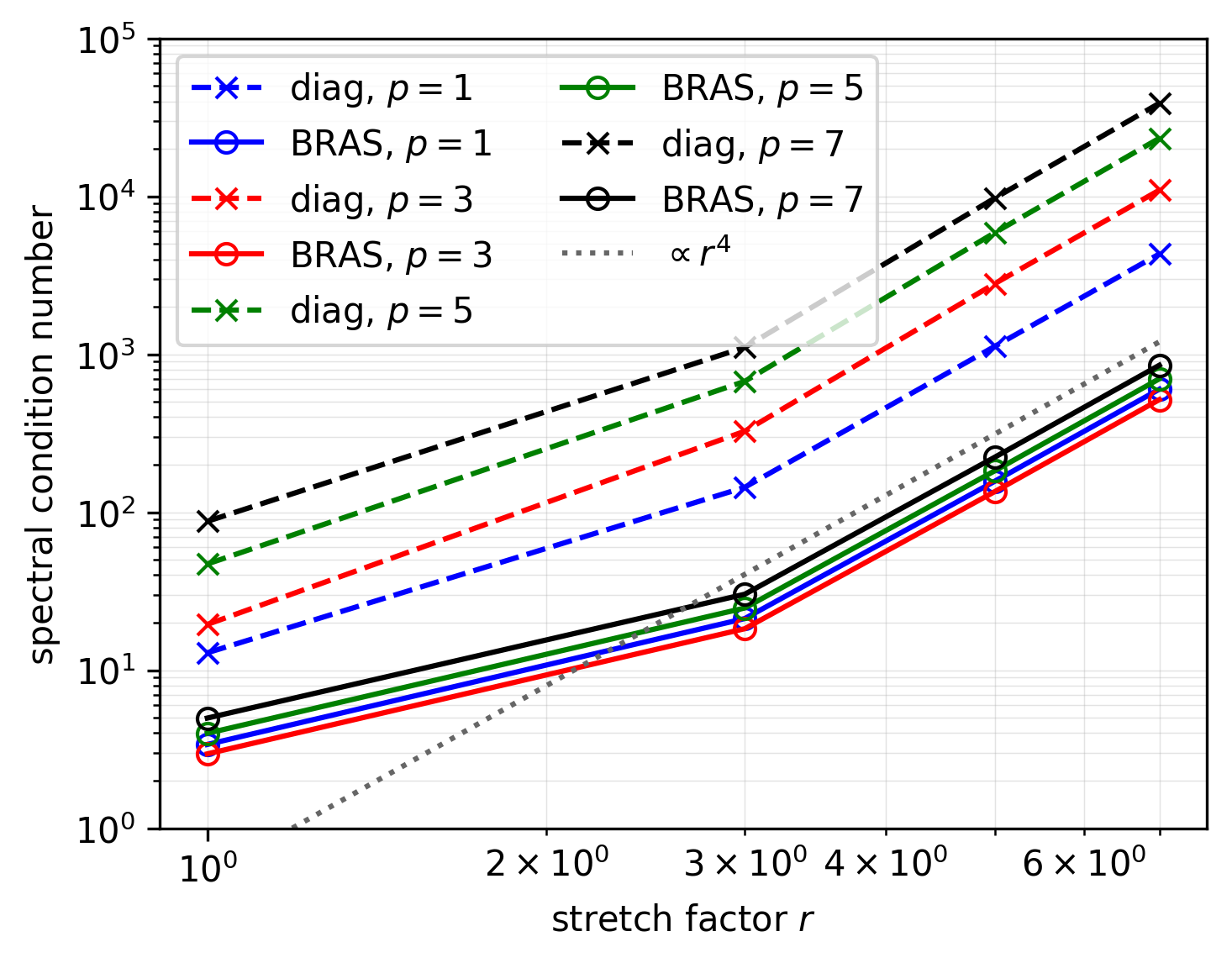}
    \end{minipage}
    \hfill
    \begin{minipage}[c]{0.32\linewidth}
        \centering
        \includegraphics[width=0.82\linewidth]{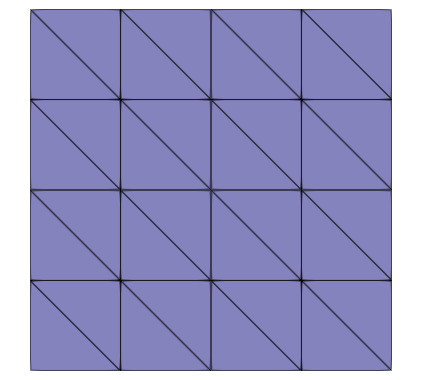}

        \vspace{0.04\linewidth}

        \includegraphics[width=0.82\linewidth]{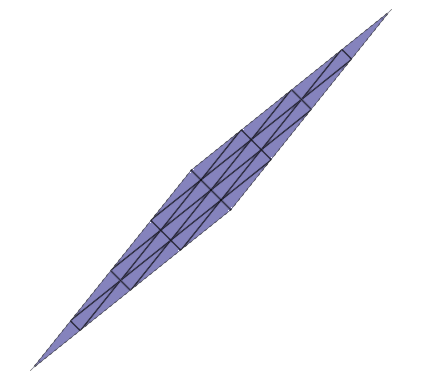}
    \end{minipage}

    \caption{Effect of the rotated-stretch map
\eqref{eq:rot_stretch_map} on the conditioning of the \(H(\div)\) BDM mass
matrix in 2D. The left panel shows spectral condition number
\(\wh{\kappa}(M,\Mdiag)\) and \(\wh{\kappa}(M,\Mbras)\) as functions of the
stretch parameter \(r\) for polynomial degrees \(p\in\{1,3,5,7\}\).
The right panels show representative
meshes for \(r=1\) and \(r=3\).  For ease of visualization, the pictured
meshes have fewer elements than those used in the experiments, which are
chosen separately for each \(p\) so that the corresponding generalized
eigenvalue problems have approximately \(5\times 10^3\) conforming DOFs.  The
mesh panels are shown with independent axis limits to emphasize element shape
rather than physical extent.}
    \label{fig:rotated-stretch-distortion}
\end{figure}

The general estimate in \cref{cor:bras_piola_metric_contrast} is stated for
\(\lambda_{\max}\), not directly for the spectral condition number.  However, in this
particular example the minimum eigenvalue satisfies
\(\lambda_{\min}(\Mbras^{-1}M)=1\) on both the reference and stretched meshes, for the reasons discussed after \cref{cor:bras_lambda_min}. Hence the spectral condition number bound stated in \cref{cor:bras_piola_metric_contrast} also applies in this case.
The resulting spectral condition numbers are shown in
\Cref{fig:rotated-stretch-distortion} for polynomial degrees
\(p=1,3,5,7\).  The BRAS spectral condition numbers grow essentially like \(r^4\), consistent with the metric-contrast factor \(\beta / \alpha \leq r^4\) above; thus, BRAS does not remove the Piola metric dependence.  However, at each fixed value of \(r\), the BRAS
spectral condition numbers are substantially smaller than their diagonal counterparts,
and the BRAS curves remain nearly flat as the polynomial degree is increased.
By contrast, the diagonal spectral condition numbers deteriorate substantially with
\(p\), roughly doubling with each increment of \(p\) shown.

\section{Block preconditioning}
\label{sec:block_preconditioning}

We now show why the quality of a mass inverse approximation matters when the inverse appears inside a Schur-complement preconditioner. Consider the
nonsingular symmetric saddle-point matrix
\begin{equation}
\label{eq:saddle_matrix_general_22}
    \mathcal A =
    \begin{bmatrix}
        M & G^{\top} \\
        G & -C
    \end{bmatrix},
\end{equation}
where $C$ is SPSD and \(M\) is an SPD mass matrix. 
This type of matrix is commonplace throughout scientific computing \cite{benzi2005saddle}.
Eliminating the first block gives the positive
Schur complement
\begin{equation}
\label{eq:positive_schur_general}
    -S = C + GM^{-1}G^{\top} \succ 0 .
\end{equation}
The Schur complement is the central object in block preconditioning for
\eqref{eq:saddle_matrix_general_22}.  For standard block diagonal and block
triangular preconditioners, the convergence of the resulting fixed-point or
Krylov iteration is governed by the conditioning and spectral distribution of
the preconditioned Schur complement, \({\wt S}^{-1}S\), for Schur complement approximation \({\wt S}\approx S\) \cite{benzi2005saddle,notay2014new,southworth2020fixed}.

In the setting considered here, the Schur complement approximation is obtained by replacing
the exact mass inverse in \eqref{eq:positive_schur_general} by a sparse SPD
approximation \(\Mwtinv\):
\begin{equation}
\label{eq:approx_positive_schur_general}
    -{\wt S} \coloneqq C + G\Mwtinv G^{\top} \succ 0 .
\end{equation}
Below we show that spectral bounds for the preconditioned mass operator \(\Mwtinv M\) directly imply spectral bounds for \({\wt S}^{-1}S\). Thus, a stronger approximation to \(M^{-1}\) may give a better conditioned preconditioned Schur complement, which in turn improves the convergence of the outer iteration on the saddle-point system. Note that the same question of mass inverse approximation arises in nonsymmetric mixed formulations and saddle-point problems as well; here we assume symmetry to enable the spectral analysis in the following section.

\subsection{Spectral mass bounds imply spectral Schur complement bounds}
\label{subsec:schur_theory}

For approximate mass inverse $\Mwtinv$, let
\(
\alpha_C(\Mwtinv) = \lambda_{\min}( S_C^{-1} {\wt S}_C )
\)
and
\(
\beta_C(\Mwtinv) = \lambda_{\max}( S_C^{-1} {\wt S}_C )
\)
denote the constants of spectral equivalence as in \eqref{eq:spec-equiv} for $({\wt S}_C, S_C)$, so that $\wh{\kappa}({\wt S}_C, S_C) = \wh{\kappa}(S_C,{\wt S}_C) = \frac{\beta_C(\Mwtinv)}{\alpha_C(\Mwtinv)}$. 
Further define related constants 
\begin{equation}
\label{eq:schur_a0-c0}
\begin{aligned}
    \alpha_0(\Mwtinv)
    :=
    \min_{\bm q \neq \bm 0}
    \frac{\bm q^{\top}G\Mwtinv G^{\top}\bm q}{\bm q^{\top}GM^{-1} G^{\top}\bm q},
    \qquad
    \beta_0(\Mwtinv)
    :=
    \max_{\bm q\neq \bm 0}
    \frac{\bm q^{\top}G\Mwtinv G^{\top}\bm q}{\bm q^{\top}GM^{-1} G^{\top}\bm q},
\end{aligned}
\end{equation}
which correspond to \(\alpha_C(\Mwtinv)\) and \(\beta_C(\Mwtinv)\) when \(C=0\), as well as a spectral equivalence of $(\widetilde M, M)$ restricted to the subspace defined by $\range(G^\top)$. In the following results we show that \(\alpha_C\) and \(\beta_C\) are controlled by \(\alpha_0\) and \(\beta_0\), and that those in turn are controlled by the spectrum of the preconditioned mass matrix.

\begin{lemma}
\label{lem:schur_condition_number_from_mass}
Let \(M\) and \(\Mwtinv\) be SPD, let \(C\succeq 0\), and assume
\(-S_C\succ 0\) and \(-{\wt S}_C\succ 0\). Then
\begin{equation}
\label{eq:schur_condition_number_general_bound}
    \wh{\kappa}(S_C,{\wt S}_C)
    \le
    \frac{\max\{1,\beta_0(\Mwtinv)\}}
         {\min\{1,\alpha_0(\Mwtinv)\}}
    \le
    \frac{\max\{1,\lambda_{\max}(\Mwtinv M)\}}
         {\min\{1,\lambda_{\min}(\Mwtinv M)\}} .
\end{equation}
If $C=0$,
\begin{equation}
\label{eq:schur_condition_number_zero_C_bound}
    \wh{\kappa}(S_0, {\wt S}_0)
    =
    \frac{\beta_0(\Mwtinv)}{\alpha_0(\Mwtinv)}
    \le
    \wh{\kappa}(M,\widetilde{M}).
\end{equation}
\end{lemma}

\begin{proof}
Let \(\bm q\neq \bm 0\), and write
\[
    c=\bm q^{\top}C\bm q,\qquad
    m=\bm q^{\top}GM^{-1}G^{\top}\bm q,\qquad
    {\wt m}=\bm q^{\top}G\Mwtinv G^{\top}\bm q .
\]
Then \(c\ge0\), \(m > 0\), and \(c+m>0\). Writing
\(\bm g=G^{\top}\bm q\),
\[
    \frac{{\wt m}}{m}
    =
    \frac{\bm g^{\top}\Mwtinv \bm g}
         {\bm g^{\top}M^{-1}\bm g}
    \in
    [\alpha_0(\Mwtinv),\beta_0(\Mwtinv)] .
\]
Now consider that
\[
    \frac{\bm q^{\top}{\wt S}_C\bm q}{\bm q^{\top}S_C\bm q}
    =
    \frac{c+{\wt m}}{c+m}
    =
    \frac{c}{c+m}
    +
    \frac{m}{c+m}\frac{{\wt m}}{m}    
    =
    \frac{c}{c+m}
    +
    \left(1 - \frac{c}{c+m} \right)\frac{{\wt m}}{m}    
\]
is a convex combination of the set \(\{1,\, \frac{{\wt m}}{m} \}\), and hence is bounded by the extrema of its elements,
\[
 \frac{\bm q^{\top}{\wt S}_C\bm q}{\bm q^{\top}S_C\bm q}
    \in
    [\min\{1,\alpha_0(\Mwtinv)\},
     \max\{1,\beta_0(\Mwtinv)\}]. 
 \]
Therefore
\begin{align*}
    \alpha_C(\Mwtinv)
    &=
    \lambda_{\min}( S_C^{-1} {\wt S}_C )
    \ge
    \min\{1,\alpha_0(\Mwtinv)\},\\
    \beta_C(\Mwtinv)
    &=
    \lambda_{\max}( S_C^{-1} {\wt S}_C )
    \le
    \max\{1,\beta_0(\Mwtinv)\}.
\end{align*}
Now note that \(\alpha_0(\Mwtinv)\) and \(\beta_0(\Mwtinv)\) are the extremal
Rayleigh quotients of \(\Mwtinv\) relative to \(M^{-1}\), restricted to
\(\range(G^{\top})\). Hence
\[
    \lambda_{\min}(\Mwtinv M)
    \le
    \alpha_0(\Mwtinv)
    \le
    \beta_0(\Mwtinv)
    \le
    \lambda_{\max}(\Mwtinv M),
\]
Combining, it follows that 
\[
    \wh{\kappa}(S_C,{\wt S}_C)
    =
    \frac{\beta_C(\Mwtinv)}{\alpha_C(\Mwtinv)}
    \le
    \frac{\max\{1,\beta_0(\Mwtinv)\}}
         {\min\{1,\alpha_0(\Mwtinv)\}}
             \le
    \frac{\max\{1,\lambda_{\max}(\Mwtinv M)\}}
         {\min\{1,\lambda_{\min}(\Mwtinv M)\}}.
\]

If \(C=0\), then the preconditioned Schur complement Rayleigh quotient is exactly the restricted
mass-inverse quotient:
\[
    \frac{\bm q^{\top}{\wt S}_0\bm q}{\bm q^{\top}S_0\bm q}
    =
    \frac{\bm g^{\top}\Mwtinv \bm g}
         {\bm g^{\top}M^{-1}\bm g},
    \qquad
    \bm g=G^{\top}\bm q .
\]
Since \(-S_0\) is SPD, \(G^{\top}\bm q\neq \bm 0\) for every
\(\bm q\neq \bm 0\). Thus the sharp constants for
\(({\wt S}_0,S_0)\) are \(\alpha_0(\Mwtinv)\) and
\(\beta_0(\Mwtinv)\), and therefore
\[
    \wh{\kappa}(S_0, {\wt S}_0)
    =
    \frac{\beta_0(\Mwtinv)}{\alpha_0(\Mwtinv)}
    \le
    \frac{\lambda_{\max}(\Mwtinv M)}
         {\lambda_{\min}(\Mwtinv M)}
    =
    \wh{\kappa}(M,\widetilde{M}).
\]
\end{proof}

\begin{corollary}
\label{cor:bras_schur_condition_number}
Let
\(
    -{\wt S}_{C,\mathrm{bras}}
    =
    C+G\Mbras^{-1}G^{\top}.
\)
Then
\begin{equation}
\label{eq:bras_schur_condition_number_C_bound}
    \wh{\kappa}(S_C,{\wt S}_{C,\mathrm{bras}})
    \le
    \beta_C(\Mbras^{-1})
    \le
    \beta_0(\Mbras^{-1})
    \le
    \lambda_{\max}(\Mbras^{-1}M).
\end{equation}
\end{corollary}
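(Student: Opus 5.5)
The plan is to combine \cref{lem:schur_condition_number_from_mass} with the lower bound $\lambda_{\min}(\Mbras^{-1}M)\ge 1$ from \cref{cor:bras_lambda_min}. The first step is to obtain the lower end of the Schur spectrum. For every $\bm g\in\range(G^\top)$, the inequality $\Mbras^{-1}\succeq M^{-1}$ (established in the proof of \cref{cor:bras_lambda_min} via \cref{lem:bras_exact_inverse_correction}) gives
\[
\frac{\bm g^\top\Mbras^{-1}\bm g}{\bm g^\top M^{-1}\bm g}\ge 1 .
\]
Hence $\alpha_0(\Mbras^{-1})\ge 1$. Substituting into the convex-combination argument from the proof of \cref{lem:schur_condition_number_from_mass}, the ratio $(c+\wt m)/(c+m)$ is a convex combination of $1$ and $\wt m/m\ge 1$, so it is at least $1$. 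Therefore $\alpha_C(\Mbras^{-1})\ge 1$.

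The second step is the first inequality. By definition,
\[
\wh\kappa(S_C,\wt S_{C,\mathrm{bras}})=\frac{\beta_C}{\alpha_C}\le \beta_C(\Mbras^{-1}),
\]
since $\alpha_C\ge1$. For the second inequality, the same convex combination bounds the Rayleigh quotient above by $\max\{1,\wt m/m\}\le\max\{1,\beta_0\}$. Since $\beta_0\ge\alpha_0\ge1$, this maximum equals $\beta_0$, which gives $\beta_C\le\beta_0$. The third inequality, $\beta_0\le\lambda_{\max}(\Mbras^{-1}M)$, is the restriction-to-subspace statement already proved in \cref{lem:schur_condition_number_from_mass}: a Rayleigh quotient of $\Mbras^{-1}$ against $M^{-1}$ over $\range(G^\top)$ is bounded by its maximum over all of $\R^n$. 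That maximum equals $\lambda_{\max}(\Mbras^{-1}M)$, because the pencil $(\Mbras^{-1},M^{-1})$ has the eigenvalues of $M\Mbras^{-1}$, which is similar to $\Mbras^{-1}M$.

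I do not expect a real obstacle. The only point needing care is the removal of the $\max\{1,\cdot\}$ and $\min\{1,\cdot\}$ wrappers, and this relies entirely on $\alpha_0\ge1$, which follows from the semidefinite ordering $\Mbras^{-1}\succeq M^{-1}$. One should also check that the hypotheses of \cref{lem:schur_condition_number_from_mass} hold. Positive definiteness of $-\wt S_{C,\mathrm{bras}}$ follows from $-\wt S_{C,\mathrm{bras}}\succeq -S_C\succ0$, again by $\Mbras^{-1}\succeq M^{-1}$.
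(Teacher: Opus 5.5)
Your proof is correct and follows the paper's argument. You apply \cref{lem:schur_condition_number_from_mass} with $\Mbras^{-1}$, use $\Mbras^{-1}\succeq M^{-1}$ (equivalently $\lambda_{\min}(\Mbras^{-1}M)\ge 1$ from \cref{cor:bras_lambda_min}) to get $\alpha_0,\alpha_C\ge 1$, and then drop the $\min\{1,\cdot\}$ and $\max\{1,\cdot\}$ wrappers, exactly as the paper does; your explicit check that $-\wt S_{C,\mathrm{bras}}\succeq -S_C\succ 0$, so that the lemma's hypotheses hold, is a small detail the paper leaves implicit.
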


\begin{proof}

The result therefore follows from \cref{lem:schur_condition_number_from_mass}
with \(\Mwtinv=\Mbras^{-1}\) combined with  \cref{cor:bras_lambda_min}, which showed that
\(
    \lambda_{\min}(\Mbras^{-1}M) \geq 1.
\)
Specifically, plugging \(\Mwtinv=\Mbras^{-1}\) into the proof above, we have 
\(
\alpha_C(\Mbras^{-1}) \geq \min \{ 1, \alpha_0(\Mbras^{-1}) \geq \min \{ 1, \lambda_{\min}(\Mbras^{-1}M) \} \geq 1
\).
\end{proof}

Note in the case \(C=0\), the bound  \(\wh{\kappa}(S_0,{\wt S}_{0,\mathrm{bras}})
    \le
    \wh{\kappa}(M,\Mbras)\) can be strict:
the Schur-complement Rayleigh quotients $\alpha_0, \beta_0$ only sample the preconditioned mass \(\Mbras^{-1}M\) on \(\range(G^{\top})\). Thus, if either of the extremal modes of \(\Mbras^{-1}M\)
lie outside this range we get the strict inequality \(\wh{\kappa}(S_0,{\wt S}_{0,\mathrm{bras}}) < \wh{\kappa}(M,\Mbras)\).
In fact, experiments in \cref{sec:mixed_poisson} show that \(\range(G^{\top})\) may very effectively filter out the extremal modes of \(\Mbras^{-1}M\), where in some cases we observe \(\wh{\kappa}(S_0,{\wt S}_{0,\mathrm{bras}}) \ll \wh{\kappa}(M,\Mbras)\).

\section{Numerical results: Block preconditioning}
\label{sec:block_preconditioning_numerics}

This section tests Schur-complement-based block preconditioners for saddle-point and nonsymmetric mixed systems, wherein we use either a diagonal or BRAS mass inverse approximation.  
The approximate Schur complements are inverted only approximately using algebraic multigrid (AMG) inside an outer Krylov method.
Note that diagonal and BRAS inverses generally lead to different approximate Schur complement matrices, with the BRAS-based matrices being relatively denser. In the
experiments below we report results for small polynomial degrees \(p\), where the AMG methods considered here give reliable Schur-complement preconditioners for both the diagonal and BRAS choices.  Developing \(p\)-robust solvers for these
approximate Schur complements is a separate issue. We note that if such \(p\)-robust solvers were
available, the theory of \Cref{subsec:schur_theory}, together with the mass-only
results of \Cref{sec:mass_numerics}, suggests that the gains from BRAS over diagonal preconditioning would increase with \(p\).
We first consider a mixed Poisson problem
in \cref{sec:mixed_poisson} and then a mixed biharmonic problem in \cref{sec:mixed_biharmonic}.

\subsection{Mixed Poisson in \(H(\operatorname{div}) \times L^2\)}
\label{sec:mixed_poisson}

Consider the Poisson problem\\
\(
    -\div \grad u = f
\)
on \(\Omega=(0,1)^d\), \(d=2,3\), with homogeneous Dirichlet data for \(u\) on
\(\partial\Omega\).  We write this equation in first-order form as
\begin{equation}
\label{eq:poisson_first_order}
    \bm{\sigma} - \grad u = 0 \quad \mathrm{in}\; \Omega,
    \qquad
    \operatorname{div}\bm{\sigma} = -f \quad  \mathrm{in}\; \Omega,
    \qquad
    u = 0 \quad \mathrm{on}\;  \partial \Omega.
\end{equation}
The mixed weak problem is: find
\((\bm{\sigma},u)\in H(\operatorname{div};\Omega)\times L^2(\Omega)\) such that
\begin{subequations}
\label{eq:mixed_poisson_weak}
\begin{align}
    (\bm{\sigma},\bm{\tau})_\Omega
    +
    (u,\operatorname{div}\bm{\tau})_\Omega
    &=
    0
    &&\forall \bm{\tau}\in H(\operatorname{div};\Omega),
    \\
    (\operatorname{div}\bm{\sigma},v)_\Omega
    &=
    -(f,v)_\Omega
    &&\forall v\in L^2(\Omega).
\end{align}
\end{subequations}
We use the \(\mathrm{BDM}_p/\mathrm{DG}_{p-1}\) mixed finite element pair, denoted by \(\Sigma_h\subset H(\operatorname{div};\Omega)\) and \(Q_h\subset L^2(\Omega)\), respectively; see
\cite[Section~7.1.2]{boffi2013mixed}.
The discrete saddle-point problem is
\begin{equation}
\label{eq:mixed_poisson_saddle}
    \begin{bmatrix}
        M & B^{\top} \\
        B & 0
    \end{bmatrix}
    \begin{bmatrix}
        \bm{\sigma} \\
        \bm{u}
    \end{bmatrix}
    =
    \begin{bmatrix}
        \bm 0 \\
        - \bm f
    \end{bmatrix},
\end{equation}
where \(M_{ij} = (\bm{\phi}_j,\bm{\phi}_i)_\Omega\),
\(B_{ij} = (\operatorname{div}\bm{\phi}_j,\chi_i)_\Omega\), and \((\bm f)_i = ( f, \chi_i )_{\Omega}\). 
Eliminating
\(\bm{\sigma}\) gives the positive \(u\)-block Schur complement
\(-S=BM^{-1}B^{\top} \succ 0\), which is generically dense since $M^{-1}$ is generically dense. 

We consider simplicial meshes, consisting of triangles in two dimensions and tetrahedra in three. 
The two discrete spaces used here have different sizes, depending on $p$ and the spatial dimension; ignoring
boundary effects, the ratio between the number of \(\Sigma_h\)-DOFs and
\(Q_h\)-DOFs is approximately
\[
\begin{aligned}
d=2:\;
\frac{\dim\Sigma_h}{\dim Q_h}
&\approx
\begin{cases}
3, & p=1,\\
2.5, & p=2,
\end{cases}
&
\qquad
d=3:\;
\frac{\dim\Sigma_h}{\dim Q_h}
&\approx
\begin{cases}
6, & p=1,\\
4.5, & p=2.
\end{cases}
\end{aligned}
\]
Thus, the \(\bm{\sigma}\)-block represents a substantial fraction of the block system \eqref{eq:mixed_poisson_saddle},
especially at low order, which is an important observation for the preconditioning strategy we describe below.

We solve \eqref{eq:mixed_poisson_saddle} with MINRES \cite{paige1975minres} using a block-diagonal preconditioner \(\mathcal P_{X_{\bm \sigma},X_u}^{-1}\) that approximates the leading mass block and Schur complement as follows:\footnote{We have also considered solving the saddle-point system with FGMRES using a block-triangular
preconditioner, including with CG-accelerated applications of both block solves in the triangular preconditioner.  
The results were qualitatively similar to the MINRES results reported here, though with slightly better performance of BRAS relative to diagonal preconditioning than in the MINRES results here.  
For simplicity, we only present the MINRES results.}
\begin{equation}
\label{eq:mixed_poisson_split_minres_prec}
    \mathcal P_{X_{\bm \sigma},X_u}^{-1}
    :=
    \begin{bmatrix}
        X_{\bm \sigma} & 0 \\
        0 & \mathcal V_{X_u}
    \end{bmatrix}
    \approx
    \begin{bmatrix}
        M^{-1} & 0 \\
        0 & -S^{-1}
    \end{bmatrix}
    =:
    \mathcal P_{\rm ideal}^{-1}.
\end{equation}
Here, \(X_{\bm \sigma},X_u\in\{\Mdiag^{-1},\Mbras^{-1}\}\), and
\(\mathcal V_{X_u}\) denotes one AMG V-cycle for the approximate Schur complement
\(-{\wt S}_{X_u}=BX_uB^{\top}\). Note that there is an additional indirect effect on convergence related to the efficacy of AMG on the approximate Schur complement formed with $\Mdiag^{-1}$ vs. $\Mbras^{-1}$, but we do not consider this comparison in detail.

\cref{tab:mixed-poisson-mass-schur-spectra} presents spectral condition numbers of the associated preconditioned operators on some fixed, small meshes.
The quantity \(\wh{\kappa}( M, X_{\bm \sigma}^{-1})\) measures the mass approximation
that appears in the \(\bm{\sigma}\)-block, while
\(\wh{\kappa}(S, {\wt S}_{X_u})\) measures the Schur complement approximation assuming exact inversion of \({\wt S}_{X_u}\), rather than one AMG V-cycle.  
The spectral condition number \(\wh{\kappa}( S, {\wt S}_{X_u})\) is covered by the \(C=0\) special case of \Cref{lem:schur_condition_number_from_mass}; rewriting that result to the current notation, we have:
\begin{equation}
\label{eq:schur_condition_number_zero_C_bound_copy}
    \wh{\kappa}( S,{\wt S}_{X_u})
    =
    \frac{
    \displaystyle
    \max_{\bm q\neq \bm 0}
    \frac{\bm q^{\top}B X_u B^{\top}\bm q}{\bm q^{\top}B M^{-1} B^{\top}\bm q}
    }{
    \displaystyle
    \min_{\bm q \neq \bm 0}
    \frac{\bm q^{\top}B X_u B^{\top}\bm q}{\bm q^{\top}B M^{-1} B^{\top}\bm q}
    }
    \leq
    \frac{
    \displaystyle
    \max_{\bm g\neq \bm 0}
    \frac{\bm g^{\top} X_u\bm g}{\bm g^{\top} M^{-1}\bm g}
    }{
    \displaystyle
    \min_{\bm g \neq \bm 0}
    \frac{\bm g^{\top} X_u \bm g}{\bm g^{\top} M^{-1} \bm g}
    }
    =
    \wh{\kappa}( M, X_u^{-1}).
\end{equation}
That is, the spectral condition number of the preconditioned Schur complement is bounded by that of the preconditioned mass, and more
sharply depends only on the action of the preconditioned mass \(X_u M\) on \(\operatorname{range}(B^{\top})\).
This restricted nature is clearly visible in \cref{tab:mixed-poisson-mass-schur-spectra}, where spectral condition numbers of preconditioned Schur complements are typically much smaller than the corresponding preconditioned mass spectral condition numbers \(\wh{\kappa}( S,{\wt S}_{X_u}) \ll \wh{\kappa}( M, X_u^{-1})\), particularly in three dimensions and at higher order.
While BRAS gives a uniformly better approximation to
\(M^{-1}\) than diagonal scaling in the full mass comparison, i.e., \(\wh{\kappa}( M, \Mbras) \ll \wh{\kappa}( M, \Mdiag)\), its improvement in the Schur complement approximation is more nuanced. %
In the precision displayed, for $p=1$ the diagonal/BRAS values are
\(2.9/2.9\) in two dimensions and \(3.0/3.0\) in three dimensions.  Thus, at
\(p=1\), the BRAS approximation improves the \(\bm{\sigma}\)-block but does not improve the Schur complement approximation in the $u$ block, despite the fact that the BRAS-based Schur complement is more expensive to build and to use in AMG.  This motivates the ``hybrid'' preconditioning choice of
\(X_{\bm \sigma}=\Mbras^{-1}\), \(X_u=\Mdiag^{-1}\).
Note that for \(p\ge2\) BRAS uniformly improves the condition number of the preconditioned Schur-complement.

\begin{table}[t!]
\centering
\begingroup
\setlength{\tabcolsep}{3pt}
\begin{tabular}{lcccccccc}
\toprule
& \multicolumn{4}{c}{Two dimensions} 
& \multicolumn{4}{c}{Three dimensions} \\
\cmidrule(lr){2-5} \cmidrule(lr){6-9}
& \(p=1\) & \(p=2\) & \(p=3\) & \(p=4\)
& \(p=1\) & \(p=2\) & \(p=3\) & \(p=4\) \\
\midrule
\(\wh{\kappa}(M,X_{\bm \sigma}^{-1})\)
& \(13/3.4\) & \(11/2.7\) & \(20/3.0\) & \(31/3.4\)
& \(39/7.1\) & \(32/5.4\) & \(210/5.7\) & \(310/6.5\) \\
\(\wh{\kappa}(S,{\wt S}_{X_u})\)
& \(2.9/2.9\) & \(5.3/1.8\) & \(8.1/1.8\) & \(9.9/2.1\)
& \(3.0/3.0\) & \(5.3/2.1\) & \(12/2.1\) & \(30/2.4\) \\
\bottomrule
\end{tabular}
\endgroup
\caption{Spectral condition number comparison of diagonal and BRAS mass-inverse approximations for the mixed Poisson problem. Entries are diagonal/BRAS. Here \(X\in\{\Mdiag^{-1},\Mbras^{-1}\}\), \(-S=BM^{-1}B^{\top}\), and \(-{\wt S}_X=BXB^{\top}\). The cases were chosen so that the discontinuous scalar space \(\Qh\) has approximately \(5\times 10^3\) DOFs.}
\label{tab:mixed-poisson-mass-schur-spectra}
\end{table}

Next we consider the numerical solution of \eqref{eq:mixed_poisson_saddle} with MINRES and the preconditioner 
    \(\mathcal P_{X_{\bm \sigma},X_u}^{-1}\) from \eqref{eq:mixed_poisson_split_minres_prec}.
    We use the MINRES implementation provided by SciPy \cite{virtanen2010scipy}, with tolerance \(\texttt{rtol=1e-10} \| \bm f \|\), and zero initial iterate, and the source function $f$ chosen randomlu.
We consider three preconditioning variations, given by ``diag'', corresponding to \(X_{\bm \sigma}=X_u=\Mdiag^{-1}\), ``BRAS'', corresponding to \(X_{\bm \sigma}=X_u=\Mbras^{-1}\), and ``hybrid'' corresponding to \(X_{\bm \sigma}=\Mbras^{-1}\), \(X_u=\Mdiag^{-1}\).
The AMG hierarchy for \({\wt S}_{X_u}\) is constructed using classical Ruge--St\"uben AMG from the PyAMG library \cite{bell2023pyamg}. We use classical strength of connection, RS coarsening with a second pass, classical interpolation, and symmetric Gauss--Seidel pre- and post-smoothing. 
The strength-of-connection threshold is selected from the fixed candidate set
\(\{0.15,0.25\}\).  For each fixed choice of dimension, $p$, and Schur-complement approximation \(X_u\), we choose the strength threshold to be the one that yields the fastest convergence of standalone AMG on \({\wt S}_{X_u}\) across a variety of representative meshes.

\begin{figure}[t!]
    \centering
    \includegraphics[width=0.95\linewidth]{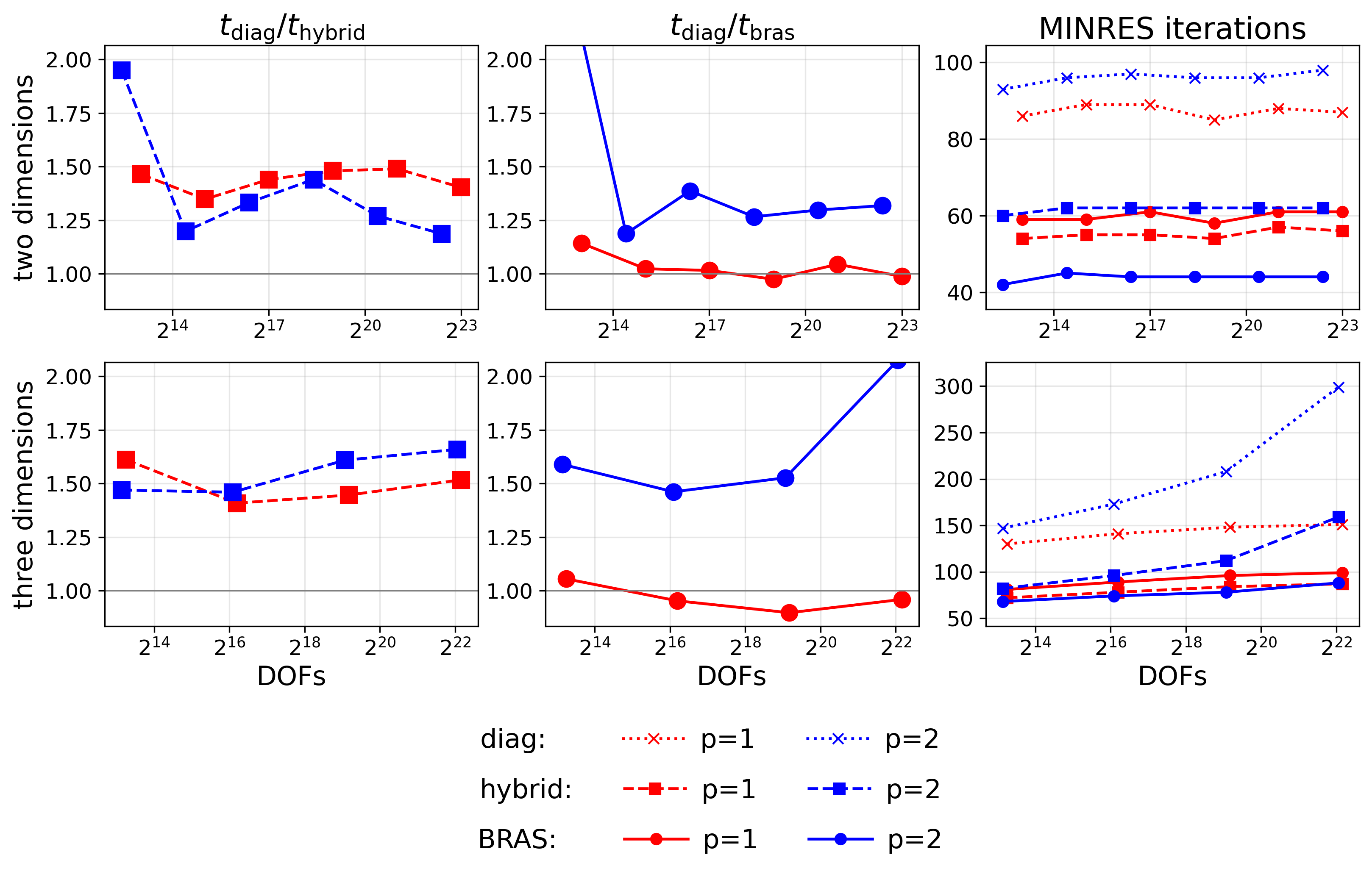}
\caption{
MINRES performance for the mixed Poisson problem with the
\(\mathrm{BDM}_p/\mathrm{DG}_{p-1}\) pair.  The top row shows two-dimensional
results and the bottom row shows three-dimensional results.  The left and middle
columns report solve-time ratios \(t_{\rm diag}/t_{\rm hybrid}\) and
\(t_{\rm diag}/t_{\rm bras}\), where ``hybrid'' denotes
\(X_{\bm \sigma}=\Mbras^{-1}\), \(X_u=\Mdiag^{-1}\).  Values above one therefore
indicate improvement over the diag preconditioner.  The right column
reports the corresponding MINRES iteration counts for the diag,
hybrid, and BRAS preconditioners.
}
\label{fig:mixed-poisson-minres-hybrid}
\end{figure}

\Cref{fig:mixed-poisson-minres-hybrid} compares the three preconditioning
approaches: diag, hybrid, and BRAS.  The left column reports
\(t_{\rm diag}/t_{\rm hybrid}\), the middle column reports
\(t_{\rm diag}/t_{\rm bras}\), and the right column reports the corresponding
MINRES iteration counts.  Thus values larger than one in the first two columns
indicate faster solves than diag.
For $p=1$, the hybrid preconditioner gives the strongest performance, reducing MINRES iteration counts and solve times relative to diag. BRAS reduces MINRES iterations over diag, but not solve time, which is consistent with
\Cref{tab:mixed-poisson-mass-schur-spectra}, because there BRAS improves the \(\bm{\sigma}\)-block mass approximation, but gives essentially the same conditioning for the Schur complement as diagonal. 
For \(p=2\), the hybrid preconditioner still improves substantially over diag, but BRAS gives the lowest iteration counts and
the best solve times on the largest cases, with 1.3--2$\times$ speedup over the diagonal reference.  This is consistent with the spectral condition number results from \cref{tab:mixed-poisson-mass-schur-spectra}, where at \(p=2\) BRAS improves both the \(\bm{\sigma}\)-block mass approximation and the $u$-block Schur-complement approximation.

\subsection{Mixed biharmonic in $H^1 \times H^1$}
\label{sec:mixed_biharmonic}

We next consider a two-dimensional, scalar mixed formulation arising from an anisotropic fourth-order tensor-divergence operator, which can be seen as a scalar version of the vector-valued electron viscosity term within Ohm's law in extended magnetohydrodynamics \cite{braginskii1965transport}. Discretizing the PDE problem in time with a diagonally implicit Runge-Kutta method results in a semi-discrete system of the form
\begin{align} \label{eq:biharmonic_semi}
\chi + \delta t \, \nabla^\perp \cdot \bm E = f, \qquad \bm E = -\nabla \cdot \big(\Pi_e(\bm j_e)\big), \qquad \bm j_e = \nabla^\perp \chi,
\end{align}
with constant ${\delta t > 0}$ the time-step size (possibly up to some proportionality constant).
Here, $\chi$ is an unknown scalar field (corresponding to a magnetic field component), $\bm E$ is the electric field, $\bm j_e$ is the current density, and $f$ is a known function. 
The electric field is equipped with perfect conducting wall boundary conditions $\bm{E} \cdot \bm{n}^\perp = 0$, and, for simplicity, we assume a zero traction condition ${{\Pi}_e(\bm{j}_e) \cdot \bm{n} = \bm 0}$, for boundary normal vector $\bm n$.
Finally, we set the anisotropic electron stress tensor to
\[
\Pi_e(\bm j_e) = -A \big(A \, : \, \nabla \bm j_e \big),
\qquad
A := \big(\bm b \otimes \bm b - \tfrac{1}{3}I\big)
\]
with \(\bm{b}\) a prescribed, unit-length anisotropy direction field.\footnote{In practice, both $\bm b$ and $\chi$ relate to the same magnetic field and are not independent of one another. Here, we consider them as separate fields, as would be the case e.g., in a Newton iteration step within a nonlinear implicit scheme for magnetohydrodynamics.}
One can show that \eqref{eq:biharmonic_semi} simplifies to the fourth-order equation
\[
    \chi+\delta t \, D^\ast_A D_A \chi=f,
    \qquad
    \mathrm{where}
    \:
    D_A u = A \, :\, \nabla(\nabla^\perp u).
\]
In our tests, we set $\delta t = h$, for spatial mesh size \(h\), in view of balancing spatial and temporal discretization errors; explicit time integration would require a CFL condition of the form $\delta t \lesssim h^4$, so the linear systems considered here are numerically stiff.

We now pose this problem in mixed form by introducing the auxiliary variable
\[
\zeta 
= 
\sqrt{\delta t}
A \, : \, \nabla \bm j_e 
= 
\sqrt{\delta t}
A
\, : \, \nabla (\nabla^\perp \chi),
\]
and the associated second-order coupling
\[
    c(v,u)
    =
    \sqrt{\delta t}
    \int_\Omega
        \nabla^\perp v\cdot \nabla\cdot\bigl(u\, A \bigr)
    \,\ud x .
\]
Given $V_h \subset H^{1}(\Omega)$, the mixed weak problem is to find \((\chi_h,\zeta_h)\in\Vh\times\Vh\) such that
\[
    (\eta_h,\chi_h)_\Omega-c(\eta_h,\zeta_h)
    =
    (\eta_h,f_h)_\Omega,
    \qquad
    (\gamma_h,\zeta_h)_\Omega+c(\chi_h,\gamma_h)
    =
    0
\]
for all \((\eta_h,\gamma_h)\in\Vh\times\Vh\).
Here \(V_h \subset H^1(\Omega)\) is an unrestricted finite-element space, and the aforementioned homogeneous boundary conditions are incorporated naturally in $c$ through integration by parts.
With
\(M_{ij}=(\phi_j,\phi_i)_\Omega\) and \(C_{ij}=c(\phi_i,\phi_j)\), this gives
\begin{align} \label{eq:biharmonic_2x2}
    \begin{bmatrix}
        M & C^{\top} \\
        -C & M
    \end{bmatrix}
    \begin{bmatrix}
        \bm{\zeta} \\
        \bm{\chi}
    \end{bmatrix}
    =
    \begin{bmatrix}
        \bm{0} \\
        \bm{f}
    \end{bmatrix}.
\end{align}
We solve this system with FGMRES \cite{saad1993flexible} using a lower triangular preconditioner,
\begin{align}
    \mathcal P_X
    =
    \begin{bmatrix}
        M & 0 \\
        -C & {\wt S}_X
    \end{bmatrix},
    \qquad
    {\wt S}_X := M+ C X C^{\top}
    \approx
    M+ C M^{-1}C^{\top}
    =: S.
\end{align}
For the mass inverse approximations, we consider \(X \in \{\Mdiag^{-1}, \, \Mbras^{-1} \}\), and we choose $\bm f$ to be a random vector. 
In terms of FGMRES, we initialize with the zero vector, iterate until \( \|\bm{r}_k\| \leq 10^{-10} \| \bm{r}_0 \| \), and we restart every 30 iterations.

We apply \({\cal P}_X^{-1}\) inexactly as follows. The mass inverse $M^{-1}$ in the $(1,1)$ block is approximated by CG preconditioned by $X$, iterated until $\texttt{rtol=1e-6}$ or the number of iterations exceeds 40.\footnote{This is a relatively tight tolerance, but this solve takes a trivial amount of the compute time relative to that of the $(2,2)$ block. This contrasts with the relative timing of the mass-only solve from the mixed-Poisson problem in the previous section, because here the number of DOFs in the (1,1) and (2,2) blocks is equal, and because the Schur complement is now a fourth-order operator.}
To approximate \({\wt S}_X^{-1}\) we use CG preconditioned by one AMG V-cycle, and iterate until $\texttt{rtol=1e-3}$ or the number of iterations exceeds 20.
Specifically, we use the least-squares algebraic-multigrid domain-decomposition (LS-AMG-DD) solver \cite{southworth2026lsamgdd}, which we have shown to be robust on a different fourth-order problem discretized directly in $H^2$ \cite{krzysik2026conforming}.
The LS-AMG-DD parameters used are as follows: 1 pass of aggregation for coarsening, pre- and post smoothing given by overlapping multiplicative Schwarz and its adjoint, respectively, the local spectral cutoff is $\tau_{\rm cut} = 1.5 \max_j {\rm mult}_{\omega}(j)$, the maximum number of levels in the hierarchy is capped at five, and the coarsening is halted if a coarse-level matrix density exceeds 25\%.

We remark that to apply LS-AMG-DD to a matrix ${\wt S}$ requires a Gram factor $G_{{\wt S}}$, such that ${\wt S} = G_{{\wt S}}^\top G_{{\wt S}}$ is a Gram matrix.
Given Gram factors $G_M$ and $G_X$ for the mass $M$ and its approximate inverse $X \approx M^{-1}$, such that $M = G_M^\top G_M$ and $X = G_X^\top G_X$, we write the approximate Schur complement as a Gram matrix as
\begin{align}
    {\wt S}_X = G_{{\wt S}}^{\top}G_{{\wt S}},
    \qquad
    G_{{\wt S}}
    =
    \begin{bmatrix}
        G_M \\
        G_X C^{\top}
    \end{bmatrix}.
\end{align}
Recalling that the conforming mass matrix can be written as $M = P^\top M_{\rm br} P$, we compute its Gram factor as $G_M = M_{\rm br}^{1/2} P$, using the approach outlined in \cite{krzysik2026conforming}.
The Gram factors for the approximate mass inverses are computed as follows: For \(X=\Mdiag^{-1}\), we take \(G_X=\Mdiag^{-1/2}\); for
\(X=\Mbras^{-1}\), we take \(G_X = M_{\rm br}^{-1/2}P D^{-1}\), according to \eqref{eq:mhatinvbras_def}.

\begin{figure}[b!]
    \centering
    \includegraphics[width=0.95\linewidth]{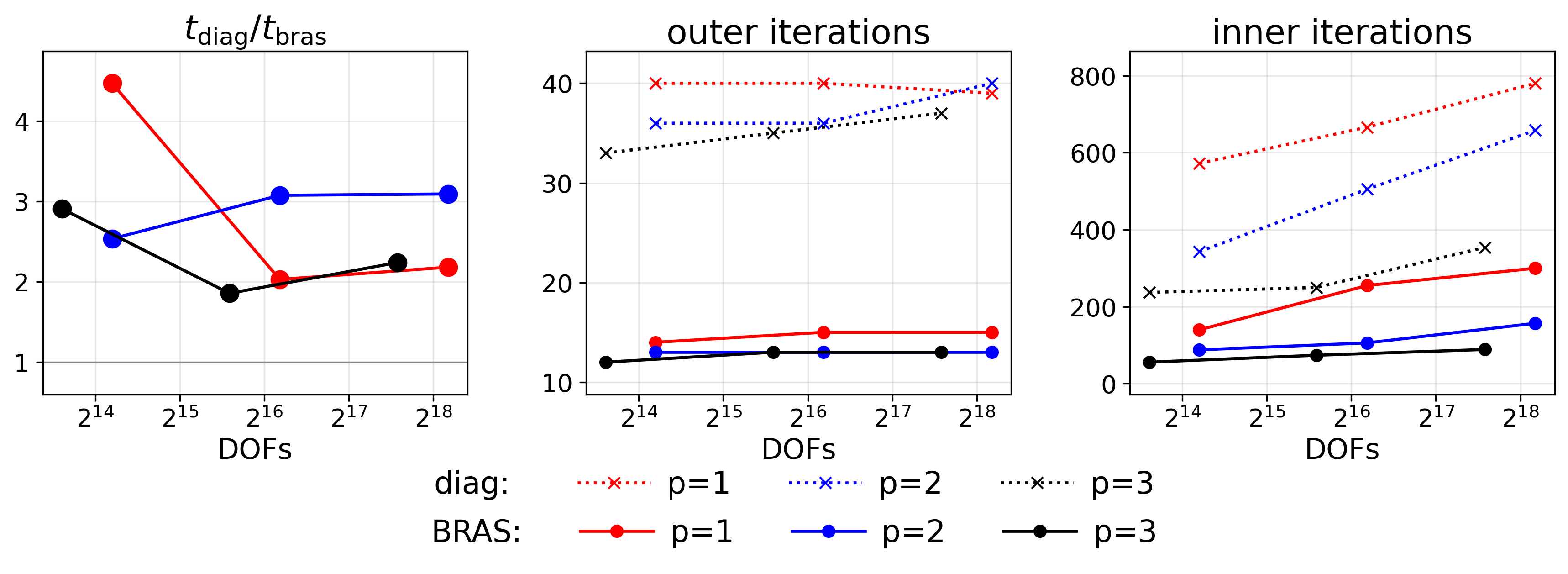}
    \caption{For the mixed biharmonic problem \eqref{eq:biharmonic_2x2}, the first column shows total FGMRES time using $X = M^{-1}_{\rm diag}$ relative to that using $X = M^{-1}_{\rm bras}$; a value larger than 1.0 corresponds to the BRAS approach being faster than the diagonal approach.
    The middle column shows total FGMRES iterations, and the third column shows total LS-AMG-DD iterations on the approximate Schur complement ${\wt S}_{X}$.
    }
    \label{fig:biharmonic}
\end{figure}

Numerical results are shown in \cref{fig:biharmonic} for Lagrange basis functions of degrees $p = 1,2,3$ using GLL nodes.
The domain \( \Omega = (0, 1)^2 \) is meshed with quadrilateral elements rather than triangular elements as in all other two-dimensional tests in the paper.
The direction vector is $\bm b = ( \cos \theta, \sin \theta )$ with $\theta = \pi/6 = 30^\circ$.
The first column shows the solve time of the diagonally-preconditioned approach relative to the BRAS-preconditioned approach; in all cases, BRAS yields a faster solve time than diagonal by a factor of $\sim 2$--$3 \times$.
Considering the middle column, the total number of FGMRES iterations is significantly decreased when using the BRAS strategy relative to the diagonal strategy, and the iteration counts are more tightly clustered as a function of $p$ for the BRAS approach.
Finally, considering the last column, the total number of LS-AMG-DD iterations is significantly smaller for the BRAS  approach, and also has much improved scaling with respect to the number of DOFs in the problem.
\section{Conclusions}
\label{sec:conclusions}

We introduce BRAS, a sparse algebraic approximate inverse for conforming
finite-element mass matrices.  BRAS is obtained by applying exact broken element mass inverses and averaging back to the conforming space.  The preconditioner is local, SPD, basis independent
at the algebraic level, and has the same element-adjacency sparsity as the
conforming mass matrix.  We characterize the preconditioned mass spectrum using a broken-space projector and show how mass-inverse approximation bounds pass to approximate Schur complements.  
The numerical results show that BRAS substantially reduces mass-matrix spectral condition numbers and Krylov iteration counts relative to diagonal scaling across \(H^1\), \(H(\operatorname{curl})\), and
\(H(\operatorname{div})\) spaces. For both explicit mass matrix solves and mixed block preconditioning problems, BRAS formulations lead to consistent solve-time improvements over diagonal preconditioning in essentially all of the tested cases, typically on the order of 1.5--3$\times$.

\section*{Acknowledgments}
The authors used OpenAI's ChatGPT during the preparation of this manuscript, including for exploratory mathematical discussion, coding, and drafting of the exposition.
All mathematical claims, proofs, computations, citations, and final wording were reviewed, verified, and edited by the authors, who take full responsibility for the manuscript.
Discussions with Steven Walton on aspects of mass preconditioning are gratefully acknowledged. 

\bibliographystyle{plain}
\bibliography{bras_refs.bib}

\end{document}